\newtheorem{theorem}{Theorem}[section]
\newtheorem{cor}[theorem]{Corollary}
\newtheorem{lem}[theorem]{Lemma}
\newtheorem{prop}[theorem]{Proposition}
\newtheorem{ex}{Example}[section]
\def \Zl {{\mathbb Z}}
\def \Nl {{\mathbb N}}
\def \Rl {{\mathbb R}}
\def \Ql {{\mathbb Q}}
\def \Cl {{\mathbb C}}
\def \x {{\mathbf x}}
\def \o {{\mathbf 0}}
\title{Pretty Good State Transfer on Some NEPS}
\author{ Hiranmoy Pal\\
Department of Mathematics\\
Indian Institute of Technology Guwahati\\
Guwahati, India - 781039\\
Email: hiranmoy@iitg.ernet.in\\
\\
Bikash Bhattacharjya\\
Department of Mathematics\\
Indian Institute of Technology Guwahati\\
Guwahati, India - 781039\\
Email: b.bikash@iitg.ernet.in
}
\begin{document}
\maketitle

\vspace{-0.3in}

\begin{center}{Abstract}\end{center}
Let $G$ be a graph with adjacency matrix $A$. The transition matrix of $G$ relative to $A$ is defined by $H_{A}(t):=\exp{(-itA)},\;t\in\Rl$. We say that the graph $G$ admits perfect state transfer between the verteices $u$ and $v$ at $\tau\in\Rl$ if the $uv$-th entry of $H_{A}(\tau)$ has unit modulus. Perfect state transfer is a rare phenomena so we consider an approximation called pretty good state transfer. We find that NEPS (Non-complete Extended P-Sum) of the path on three vertices with basis containing tuples with hamming weights of both parities do not exhibit perfect state transfer. But these NEPS admit pretty good state transfer with an additional condition. Further we investigate pretty good state transfer on Cartesian product of graphs and we find that a graph can have PGST from a vertex $u$ to two different vertices $v$ and $w$.

\noindent {\textbf{Keywords}: Perfect state transfer, pretty good state transfer, NEPS of graphs.} 

\noindent {\textbf{MSC}: 05C50, 15A16}
\section{Introduction}

We consider continuous-time quantum walk relative to the adjacency matrix of a graph. Perfect state transfer in quantum networks was introduced in \cite{bose}. Let $G$ be a graph with adjacency matrix $A$. The transition matrix of $G$ relative to $A$ is defined by
\[H_{A}(t):=\exp{(-itA)}=\sum\limits_{n\geq 0}(-i)^nA^n\frac{t^n}{n!},\;t\in\Rl.\] We say that the graph $G$ exhibits perfect state transfer between the verteices $u$ and $v$ at $\tau\in\Rl$ if there is $\gamma\in\Cl$ with $|\gamma|=1$ such that $H_{A}(\tau)e_{u}=\gamma e_{v}$. We are mainly interested in finding graphs having perfect state transfer. In \cite{chr2}, we find that Cartesian powers of $P_2$ which is the path on two vertices, and Cartesian powers of $P_3$ which is the path on three vertices, admit perfect state transfer. Some other related results are also given in \cite{ple}. Further the results has been generalized for the path $P_2$ in \cite{ber,che}. The question of existence of perfect state transfer in NEPS of $P_3$ was asked in \cite{stev}. In \cite{pal}, we find that when the hamming weight of each element of the basis of an NEPS of $P_3$ is of the same parity then an additional condition guarantees perfect state transfer in the NEPS. In this paper, we show that if we allow the basis set to contain elements with hamming weights of both parities, then there is no perfect state transfer in the NEPS. Some more results regarding perfect state transfer can be found in \cite{mil4,god1,god3,pal1,pal2}. \par
Perfect state transfer is a rare phenomena. So we consider an approximation to it which is known as pretty good state transfer. We say that a graph $G$ has pretty good state transfer (PGST) from a vertex $u$ to $v$ if, for each $\epsilon>0$ there exists $t\in\Rl$ such that
\begin{center}
$||e_u^T H(t)e_v|-1|<\epsilon,$
\end{center}
\emph{i.e,} the modulus of the $uv$-th entry of the transition matrix comes arbitrary close to $1$. There are very few graphs known to have PGST. In \cite{god4}, Godsil et. al. found that the path $P_n$ admits PGST between the end vertices if and only if $n+1=2^m$, or if $n+1=p$ or $2p$ for some odd prime $p$. A characterization of PGST in double stars have been given in \cite{fan}. Some interesting results regarding entries of the transition matrix are also given in \cite{kirk}.\par
In this paper, we find a class of NEPS of $P_3$ exhibiting PGST where there is no perfect state transfer. Further we study Cartesian product of graphs and find a way to construct more graphs allowing PGST. As a corollary we obtain a class of NEPS with factor graphs $P_2$ and $P_3$ admitting PGST.\par
Now we define NEPS (Non-complete Extended P-Sum) of $n$ graphs $G_{1}, \ldots, G_{n}$ with a basis set $\Omega\subset\Zl_{2}^{n}\setminus\left\lbrace\o\right\rbrace$. The NEPS \cite{cev} of $G_{1}, \ldots, G_{n}$ with basis $\Omega$ is denoted by $NEPS\left(G_{1}, \ldots, G_{n};\Omega\right)$. This NEPS has the vertex set $V\left(G_{1}\right)\times\cdots\times V\left(G_{n}\right)$. Two vertices $(x_{1}, \ldots, x_{n})$ and $(y_{1}, \ldots, y_{n})$ are adjacent in $NEPS\left(G_{1}, \ldots, G_{n}; \Omega\right)$ if and only if there is an $n$-tuple $\left(\beta_{1}, \ldots, \beta_{n}\right)\in\Omega$ such that $x_{i}=y_{i}$ exactly when $\beta_{i}=0$ and $x_{i}$ is adjacent to $y_{i}$ in $G_{i}$ exactly when $\beta_{i}=1$. Suppose the graphs $G_{1}, \ldots,G_{n}$ have the adjacency matrices $A_{1}, \ldots, A_{n}$, respectively. Then the adjacency matrix of $NEPS\left(G_{1}, \ldots,G_{n}; \Omega\right)$ can be obtained as \[A_{\Omega} = \sum\limits_{\beta\in\Omega}A_{1}^{\beta_{1}}\otimes \cdots\otimes A_{n}^{\beta_{n}},\]
where $A\otimes B$ denotes the tensor product of two matrices $A$ and $B$. Suppose the number of vertices in $G_i$ is $n_i$. For $i=1,2,\ldots,n$, let $G_i$ have the eigenvalues $\lambda_{i1},\ldots,\lambda_{in_i}$, not necessarily distinct and suppose the corresponding eigenvectors are $\x_{i1},\ldots,\x_{in_i}$. Then the NEPS with basis $\Omega$ has the eigenvalues
\begin{eqnarray}\label{q0}
\Lambda_{j_1\ldots j_n}=\sum\limits_{\beta\in\Omega}\lambda_{1j_1}^{\beta_1}\cdots\lambda_{nj_n}^{\beta_n}, \; j_k=1,\ldots,n_k,\; k=1,\ldots,n.
\end{eqnarray}
Here the eigenvector corresponding to $\Lambda_{j_1\ldots j_n}$ is the vector $\x_{1j_1}\otimes\cdots\otimes\x_{nj_n}$. See \cite{cev} for details.

\section{No perfect state transfer in a class of NEPS of $P_3$}

Suppose $G$ is a graph with transition matrix $H_{A}(t)$ relative to the adjacency matrix $A$.  The graph is said to be periodic at a vertex $u$ if there is $\gamma\in\Cl$ with $|\gamma|=1$ and $\tau(\neq 0)\in\Rl$ so that \[H_{A}(\tau)e_u=\gamma e_u.\] The graph is said to be periodic if it is periodic at all vertices at the same time. In such a case we have $H_A(\tau)=\gamma I$ where $I$ is the identity matrix of appropriate order. It is well known that periodicity is necessary for a graph to exhibit perfect state transfer which follows from the following Lemma.

\begin{lem}\cite{god3}\label{p0}
If a graph $G$ admits perfect state transfer from a vertex $u$ to another vertex $v$ at time $\tau$ then $G$ is periodic at $u$ and $v$ with period $2\tau$.
\end{lem}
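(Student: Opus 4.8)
The plan is to show that perfect state transfer (PST) from $u$ to $v$ forces periodicity at both vertices with period $2\tau$. The natural strategy is to combine the PST hypothesis with the symmetry of the transition matrix and the group-like property $H_A(t)H_A(s)=H_A(t+s)$. Since $A$ is real symmetric, $H_A(t)=\exp(-itA)$ is symmetric (though not Hermitian in the usual sense, it satisfies $H_A(t)^T=H_A(t)$), and it is also unitary because $A$ is Hermitian. These two facts together are what I expect to drive the argument.

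First I would write the PST condition as $H_A(\tau)e_u=\gamma e_v$ with $|\gamma|=1$. The goal is to compute $H_A(2\tau)e_u$. Using the semigroup identity, $H_A(2\tau)=H_A(\tau)H_A(\tau)$, so I would apply $H_A(\tau)$ to both sides of the PST relation to get $H_A(2\tau)e_u=\gamma H_A(\tau)e_v$. The crux is then to evaluate $H_A(\tau)e_v$. Here I would exploit unitarity: since $H_A(\tau)$ is unitary and $H_A(\tau)e_u=\gamma e_v$, taking adjoints gives $e_u^{*}H_A(\tau)^{*}=\bar\gamma e_v^{*}$, i.e. $H_A(-\tau)e_v=\bar\gamma e_u$ (using $H_A(\tau)^{*}=H_A(-\tau)$). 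Combining this with symmetry of $H_A$ should pin down $H_A(\tau)e_v$ up to a unit scalar multiple of $e_u$.

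The key step, which I anticipate is the main technical point, is to argue that $H_A(\tau)e_v=\gamma e_u$ (the same phase $\gamma$), rather than merely some unit multiple of $e_u$. This follows from symmetry: the $vu$-entry and $uv$-entry of $H_A(\tau)$ coincide, so the phase transferred from $u$ to $v$ equals the phase transferred from $v$ to $u$. Once I have $H_A(\tau)e_v=\gamma e_u$, substituting back yields $H_A(2\tau)e_u=\gamma H_A(\tau)e_v=\gamma^2 e_u$, which is exactly periodicity at $u$ with period $2\tau$ and phase $\gamma^2$ (note $|\gamma^2|=1$). By the symmetric computation starting from $H_A(\tau)e_v=\gamma e_u$ and applying $H_A(\tau)$ again, I would obtain $H_A(2\tau)e_v=\gamma^2 e_v$, giving periodicity at $v$ with the same period $2\tau$.

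The main obstacle is correctly handling the phases and the interplay between symmetry and unitarity to ensure the period is exactly $2\tau$ with a consistent phase at both vertices; a careless treatment could give a period of $4\tau$ or an ambiguous scalar. Being careful that $H_A(\tau)$ is both unitary and (complex) symmetric, so that $H_A(\tau)^{T}=H_A(\tau)$ and $H_A(\tau)^{*}H_A(\tau)=I$, is what makes the phases on the $u\to v$ and $v\to u$ transfers agree and forces the clean conclusion $H_A(2\tau)e_u=\gamma^{2}e_u$ and $H_A(2\tau)e_v=\gamma^{2}e_v$.
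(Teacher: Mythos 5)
Your proof is correct. Note that the paper does not prove this lemma at all --- it is quoted from Godsil's \emph{Periodic graphs} \cite{god3} --- and your argument is precisely the standard one: since $A$ is real symmetric, $H_A(\tau)$ is both unitary and complex symmetric, so $e_u^TH_A(\tau)e_v=e_v^TH_A(\tau)e_u=\gamma$, and unitarity (columns of unit norm) then forces $H_A(\tau)e_v=\gamma e_u$ exactly, giving $H_A(2\tau)e_u=\gamma^2e_u$ and $H_A(2\tau)e_v=\gamma^2e_v$.
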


We are dealing with undirected simple graphs so the associated adjacency matrices are symmetric. Assume that $A$ is the adjacency matrix of a graph $G$. Suppose $\lambda_1,\ldots,\lambda_m$ are the distinct eigenvalues of $A$ and let the projections to the respective eigenspaces be $E_1,\ldots,E_m$. Then the transition matrix of $G$ can be evaluated as
\[H_A(t)=\exp{(-itA)}=\sum\limits_{r=1}^m \exp{(-it\lambda_r)}E_r.\] 

Suppose the graph $G$ has $n$ vertices. The eigenvalue support of a vector $\x\in\Rl^n$ is a set containing those eigenvalues $\lambda_r$ of $A$ for which $E_r \x\neq\o$. Here we include one simple observation. Suppose $E_{r}=F_1+\cdots+F_k$ where $F_iF_j=\delta_{ij}F_i$ and $F_i^T=F_i$, $1\leq i,j\leq k$. Here $\delta_{ij}$ is the usual Kronecker delta function. Now for any vector $\x$, we obtain
\[\left(F_i \x\right)^TE_r \x=\left(F_i \x\right)^T\left(F_1\x+\cdots+F_k\x\right)=||F_i \x||^2,\; 1\leq i\leq k.\] 
Therefore if $F_i \x\neq\o$ for some $i$ then the eigenvalue $\lambda_r$ belongs to the eigenvalue support of the vector $\x$. The following theorem characterizes the eigenvalues of a periodic graph.

\begin{theorem}\cite{god3}\label{p1}
Let a graph $G$ be periodic at a vertex $u$. If $\lambda_k, \lambda_l, \lambda_r, \lambda_s$ are eigenvalues in the eigenvalue support of $e_u$ and $\lambda_r\neq\lambda_s$ then
\begin{eqnarray}\label{q1}
\frac{\lambda_k-\lambda_l}{\lambda_r-\lambda_s}\in\Ql.
\end{eqnarray}
\end{theorem}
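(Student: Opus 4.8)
The plan is to extract, from the single periodicity relation $H_A(\tau)e_u=\gamma e_u$, a uniform condition on the phases $\exp(-i\tau\lambda_r)$ over all eigenvalues in the support of $e_u$, and then to read off the rationality claim as an equality of ratios of integers.

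First I would expand the transition matrix spectrally as $H_A(\tau)=\sum_{r=1}^m \exp(-i\tau\lambda_r)E_r$ and apply it to $e_u$, using the resolution $e_u=\sum_{r=1}^m E_r e_u$. The periodicity hypothesis then becomes
\[\sum_{r=1}^m \exp(-i\tau\lambda_r)\,E_r e_u=\gamma\sum_{r=1}^m E_r e_u.\]

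The key step—and the one doing the real work—is to note that the vectors $E_r e_u$ lie in pairwise orthogonal eigenspaces and so are linearly independent whenever nonzero. Applying $E_r$ to the identity above and using $E_rE_s=\delta_{rs}E_r$ gives $\exp(-i\tau\lambda_r)E_r e_u=\gamma E_r e_u$ for each $r$. For every $\lambda_r$ in the eigenvalue support of $e_u$ we have $E_r e_u\neq\o$, so cancelling yields $\exp(-i\tau\lambda_r)=\gamma$ for all such $\lambda_r$. Thus periodicity forces a common phase on the entire support.

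From here the conclusion is routine. For any two support eigenvalues $\lambda_a,\lambda_b$ we get $\exp(-i\tau(\lambda_a-\lambda_b))=1$, hence $\tau(\lambda_a-\lambda_b)\in 2\pi\Zl$; write $\tau(\lambda_a-\lambda_b)=2\pi m_{ab}$ with $m_{ab}\in\Zl$. Applying this to the pairs $(\lambda_k,\lambda_l)$ and $(\lambda_r,\lambda_s)$ and dividing, the common factor $2\pi/\tau$ cancels and we obtain $\frac{\lambda_k-\lambda_l}{\lambda_r-\lambda_s}=\frac{m_{kl}}{m_{rs}}$. Since $\tau\neq 0$ and $\lambda_r\neq\lambda_s$, the integer $m_{rs}$ is nonzero, so this ratio is a genuine rational number. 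I expect no serious obstacle: the one point needing care is the cancellation of $E_r e_u$, which is exactly where the orthogonality of the spectral idempotents and the definition of the eigenvalue support are used. Note that the argument invokes periodicity at $u$ only through a single pair $(\tau,\gamma)$ and carries over verbatim to any vertex at which $G$ is periodic.
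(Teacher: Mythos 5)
Your proposal is correct and complete. Note that the paper itself gives no proof of this theorem---it is quoted from \cite{god3}---so there is nothing internal to compare against; your argument is the standard necessity direction of Godsil's ratio condition, and it is exactly the right one: applying each idempotent $E_r$ to the relation $H_A(\tau)e_u=\gamma e_u$ and cancelling the nonzero vectors $E_re_u$ forces a common phase $\exp(-i\tau\lambda_r)=\gamma$ on the whole eigenvalue support, after which the differences $\tau(\lambda_a-\lambda_b)\in 2\pi\Zl$ yield the rational ratio. Your closing observations are also the two points that genuinely matter here: the cancellation step is where the definition of eigenvalue support ($E_re_u\neq\o$) enters, and the nonvanishing of the denominator integer $m_{rs}$ uses both $\lambda_r\neq\lambda_s$ and the requirement $\tau\neq 0$ built into the paper's definition of periodicity.
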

Assume that $\Omega\subset\Zl_{2}^{n}\setminus\left\lbrace\o\right\rbrace$ which contains tuples with hamming weights of both parities. Now we find that NEPS of $P_3$ with basis $\Omega$ does not exhibit perfect state transfer. We denote $s(\beta)$ to be the hamming weight of a tuple $\beta$. 

\begin{theorem}\label{p2}
Suppose $\Omega_e$ and $\Omega_o$ are both nonempty subsets of $\Zl_{2}^{n}\setminus\left\lbrace\o\right\rbrace$ such that the hamming weight of each tuple in $\Omega_e$ and $\Omega_o$ are even and odd, respectively. If $\Omega=\Omega_e\cup\Omega_o$ then $NEPS\left(P_3, \ldots, P_3;\Omega\right)$ does not exhibit perfect state transfer.
\end{theorem}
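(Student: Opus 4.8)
The plan is to argue by contradiction using the two results already recorded. Suppose $NEPS(P_3,\ldots,P_3;\Omega)$ admitted perfect state transfer from a vertex $u$ to a vertex $v$. By Lemma \ref{p0} the graph would then be periodic at $u$, so by Theorem \ref{p1} every quadruple of eigenvalues $\lambda_k,\lambda_l,\lambda_r,\lambda_s$ lying in the eigenvalue support of $e_u$ (with $\lambda_r\neq\lambda_s$) would satisfy the rationality condition \eqref{q1}. My goal is therefore to exhibit, for an \emph{arbitrary} vertex $u$, four eigenvalues in the support of $e_u$ for which the ratio in \eqref{q1} is irrational; since the construction will work for every $u$ and perfect state transfer from $u$ forces periodicity at $u$, this single contradiction rules out perfect state transfer entirely.

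First I would record the arithmetic of the spectrum. Since $P_3$ has eigenvalues $\sqrt2,0,-\sqrt2$, I write each coordinate eigenvalue as $\lambda_{ij_i}=\sqrt2\,\mu_i$ with $\mu_i\in\{1,0,-1\}$. Substituting into \eqref{q0} and using $\beta_i\in\{0,1\}$ gives, for each assignment $\mu=(\mu_1,\ldots,\mu_n)$,
\[\Lambda(\mu)=\sum_{\beta\in\Omega}(\sqrt2)^{s(\beta)}\prod_{i:\beta_i=1}\mu_i=\sum_{\beta\in\Omega_e}(\sqrt2)^{s(\beta)}\prod_{i:\beta_i=1}\mu_i+\sum_{\beta\in\Omega_o}(\sqrt2)^{s(\beta)}\prod_{i:\beta_i=1}\mu_i.\]
Because $s(\beta)$ is even on $\Omega_e$ and odd on $\Omega_o$, the first sum is an integer $a(\mu)$ and the second is $\sqrt2$ times an integer $b(\mu)$, so every eigenvalue has the shape $\Lambda(\mu)=a(\mu)+b(\mu)\sqrt2$ with $a(\mu),b(\mu)\in\Zl$.

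Next I would pin down which of these lie in the support of $e_u$. The $\pm\sqrt2$-eigenvectors of $P_3$ are $\tfrac12(1,\pm\sqrt2,1)^T$, whose entries are all nonzero; hence, by the rank-one projection observation preceding Theorem \ref{p1}, every assignment $\mu\in\{1,-1\}^n$ produces an eigenvalue whose eigenprojection does not annihilate $e_u$, for any vertex $u$. Working only with such $\pm1$ assignments keeps me inside the support regardless of $u$ (and sidesteps the vanishing middle entry of the $0$-eigenvector). I would then build two differences. Taking $\mu=\mathbf1$ and $\mu=-\mathbf1$ flips the sign of exactly the odd-weight contributions, so
\[\Lambda(\mathbf1)-\Lambda(-\mathbf1)=2\sum_{\beta\in\Omega_o}(\sqrt2)^{s(\beta)}=2\sqrt2\sum_{\beta\in\Omega_o}2^{(s(\beta)-1)/2},\]
which is a nonzero multiple of $\sqrt2$ precisely because $\Omega_o\neq\emptyset$. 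For the second difference I pick $\gamma\in\Omega_e$ and a coordinate $i_0$ with $\gamma_{i_0}=1$, then compare $\mathbf1$ with the assignment $\mu'$ obtained by flipping $\mu_{i_0}$ to $-1$; this negates exactly the terms with $\beta_{i_0}=1$, giving $\Lambda(\mathbf1)-\Lambda(\mu')=2\sum_{\beta:\beta_{i_0}=1}(\sqrt2)^{s(\beta)}$, whose rational part $2\sum_{\beta\in\Omega_e,\ \beta_{i_0}=1}(\sqrt2)^{s(\beta)}$ is strictly positive because it contains the term from $\gamma$.

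Finally, writing the two differences as integer vectors $(0,b_1)$ with $b_1\neq0$ and $(a_2,b_2)$ with $a_2\neq0$, their quotient $\frac{b_1\sqrt2}{a_2+b_2\sqrt2}$ is irrational (equating rational and irrational parts forces the quotient to vanish, contradicting $b_1\neq0$). Taking $\lambda_k=\lambda_r=\Lambda(\mathbf1)$, $\lambda_l=\Lambda(-\mathbf1)$ and $\lambda_s=\Lambda(\mu')$ then violates \eqref{q1}, the desired contradiction. The one point needing care — and the reason both $\Omega_e$ and $\Omega_o$ must be nonempty — is that the product $\prod_{i:\beta_i=1}\mu_i$ couples all coordinates, so a single sign flip cannot cleanly isolate the even or the odd part; I avoid this by not insisting that the second difference be purely rational, only that it have a nonzero rational part, which is all that is needed to make the two difference vectors non-parallel over $\Ql$.
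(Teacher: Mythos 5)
Your proposal is correct and follows essentially the same route as the paper's own proof: both invoke Lemma \ref{p0} and Theorem \ref{p1}, use the all-nonzero entries of the $\pm\sqrt2$-eigenvectors of $P_3$ to place the relevant eigenvalues in every vertex's eigenvalue support, and then compare the all-plus assignment, the all-minus assignment, and a single sign flip at a coordinate where some even-weight tuple is supported to produce a ratio of eigenvalue differences of the form $\frac{b_1\sqrt2}{a_2+b_2\sqrt2}\notin\Ql$. Your sign-vector parametrization $\mu\in\{1,-1\}^n$ and the non-parallelism phrasing are only cosmetic variants of the paper's $a\pm b\sqrt2$ versus $c+d\sqrt2$ computation.
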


\begin{proof}
The graph $P_{3}$ has the eigenvalues $-\sqrt{2},\;0,\;\sqrt{2}$ and the corresponding eigenvectors are 
\begin{eqnarray*}
       \begin{array}{lll}
     \x_{1} = \left(\begin{array}{r}
               1\\
               -\sqrt{2}\\
               1\end{array} \right), &
     \x_{2} =\left(\begin{array}{r}
               1\\
               0\\
               -1\end{array} \right), &
     \x_{3} =\left(\begin{array}{r}
               1\\
               \sqrt{2}\\
               1\end{array} \right). \end{array}
\end{eqnarray*}
Therefore the eigenvectors of $NEPS\left(P_3, \ldots, P_3;\Omega\right)$ are
\[\x_{j_1}\otimes\cdots\otimes\x_{j_n},\; \text{where }j_1,\ldots,j_n\in\left\lbrace 1,\;2,\;3\right\rbrace.\]
Notice that, if $j_1,\ldots,j_n\in\left\lbrace 1,\;3\right\rbrace$ then all columns of $\left(\x_{j_1}\otimes\cdots\otimes\x_{j_n}\right)\left(\x_{j_1}\otimes\cdots\otimes\x_{j_n}\right)^T$ are nonzero. Hence, using the observation preceding Theorem \ref{p1}, the eigenvalue support of all standard unit vectors in $\Rl^{3^n}$ contains the eigenvalue corresponding to the vector $\x_{j_1}\otimes\cdots\otimes\x_{j_n}.$
\par Observe that the hamming weight $s(\beta)$ is even or odd according as $\beta$ is in $\Omega_e$ or $\Omega_o$. Using equation (\ref{q0}), the eigenvalue corresponding to $\x_1\otimes \x_1\otimes\cdots\otimes \x_1$ can be calculated as
\begin{eqnarray*}
\sum\limits_{\beta\in\Omega}\left(-\sqrt{2}\right)^{s(\beta)}
& = &\sum\limits_{\beta\in\Omega_e}\left(-\sqrt{2}\right)^{s(\beta)}+\sum\limits_{\beta\in\Omega_o}\left(-\sqrt{2}\right)^{s(\beta)}\\
& = &\sum\limits_{\beta\in\Omega_e}\left(\sqrt{2}\right)^{s(\beta)}-\sum\limits_{\beta\in\Omega_o}\left(\sqrt{2}\right)^{s(\beta)}\\
& = & a-b\sqrt{2},\; \text{ where $a$ and $b$ are positive integers}.
\end{eqnarray*}
Similarly the eigenvalue corresponding to the vector $\x_3\otimes \x_3\otimes\cdots\otimes \x_3$ can be obtained in terms of $a$ and $b$ as
\begin{eqnarray*}
\sum\limits_{\beta\in\Omega}\left(\sqrt{2}\right)^{s(\beta)}
& = &\sum\limits_{\beta\in\Omega_e}\left(\sqrt{2}\right)^{s(\beta)}+\sum\limits_{\beta\in\Omega_o}\left(\sqrt{2}\right)^{s(\beta)}\\
& = & a+b\sqrt{2}.
\end{eqnarray*}
For $1\leq j\leq n$ consider $\Omega_e ^j= \left\lbrace \beta\in\Omega_e:\beta_j=1\right\rbrace$. Since $\Omega_e$ is a non-empty subset of $\Zl_{2}^{n}\setminus\left\lbrace\o\right\rbrace$ there is at least one $j$ for which $\Omega_e ^j$ is also non-empty. Without loss of generality let $j=1$ and hence the integral part of the eigenvalue corresponding to $\x_1\otimes \x_3\otimes\cdots\otimes \x_3$ is
\[\sum\limits_{\beta\in\Omega_e^1}\left(-\sqrt{2}\right)\left(\sqrt{2}\right)^{s(\beta)-1}
+\sum\limits_{\beta\in\Omega_e\setminus\Omega_e^1}\left(\sqrt{2}\right)^{s(\beta)},\]
which is clearly not equal to the integer $a$. Assume that the eigenvalue corresponding to the vector $\x_1\otimes \x_3\otimes\cdots\otimes \x_3$ is $c+d\sqrt{2}$, for some integers $c(\neq a)$ and $d$.\par
Now if $NEPS\left(P_3, \ldots, P_3;\Omega\right)$ exhibits perfect state transfer then by Lemma \ref{p0} the graph is periodic at some vertex, say $u$. Hence by Theorem \ref{p1} the eigenvalues in the eigenvalue support of the characteristic vector of $u$ must satisfy the ratio condition (\ref{q1}). The eigenvalues $a+b\sqrt{2},\; a-b\sqrt{2}\text{ and }c+d\sqrt{2}$ lies in the eigenvalue support of $e_u$. Note that
\begin{eqnarray*}
\frac{\left(a+b\sqrt{2}\right)- \left(a-b\sqrt{2}\right)}{\left(a+b\sqrt{2}\right)-\left(c+d\sqrt{2}\right)}
& = & \frac{2b\sqrt{2}}{\left(a-c\right)+\left(b-d\right)\sqrt{2}}\\
& = & \frac{2b\sqrt{2}\left[\left(a-c\right)-\left(b-d\right)\sqrt{2} \right]}{\left(a-c\right)^2-2\left(b-d\right)^2}\not\in\Ql.
\end{eqnarray*}
Therefore $NEPS\left(P_3, \ldots, P_3;\Omega\right)$ is not periodic at all vertices and hence the graph does not exhibit perfect state transfer.
\end{proof}

The above result gives a partial characterization of perfect state transfer in the class of all NEPS of $P_3$. In the following section we investigate PGST on NEPS of $P_3$ with basis $\Omega$ containing tuples with hamming weights of both parities.

\section{Pretty good state transfer on NEPS of $P_{3}$}

Before discussing PGST on NEPS we mention some previously known results. The following result shows that transition matrix of an NEPS can be realized as a product of transition matrices of some of its spanning subgraphs. 

\begin{prop}\cite{pal}\label{a1}
Let $G_{1}, \ldots,G_{n}$ be $n$ graphs and consider $\Omega\subset\Zl_{2}^{n}\setminus\left\lbrace\o\right\rbrace$. For $\beta\in\Omega$, let $H_{\beta}(t)$ be the transition matrix of $NEPS\left(G_{1}, \ldots,G_{n}; \left\lbrace\beta\right\rbrace\right)$. Then $NEPS\left(G_{1}, \ldots,G_{n}; \Omega\right)$ has the transition matrix \[H_{\Omega}(t)=\prod\limits_{\beta\in\Omega}H_{\beta}(t).\]
\end{prop}

Let us denote $\tau_{k}=\frac{\pi}{\left(\sqrt{2}\right)^{k}}$ for $k\in\Nl$. Then we have the following lemma.

\begin{lem}\cite{pal}\label{a2}
Let $\beta=\left(\beta_{1},\ldots \beta_{n}\right)\in\Zl_{2}^{n}\setminus\left\lbrace\o\right\rbrace$ and suppose $H_{\beta}(t)$ is the transition matrix of $NEPS\left(P_{3}, \ldots,P_{3}; \left\lbrace\beta\right\rbrace\right)$. If the hamming weight of $\beta$ is $k$ then $H_{\beta}(-\tau_{k})=H_{\beta}(\tau_{k})$.
\end{lem}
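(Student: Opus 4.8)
The plan is to compute the distinct eigenvalues of the single-basis NEPS and then read off the transition matrix at $\pm\tau_k$ directly from its spectral decomposition.

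First I would write down the adjacency matrix. Since $\Omega=\left\lbrace\beta\right\rbrace$ has only one element, the NEPS adjacency matrix is $A_\beta=A^{\beta_1}\otimes\cdots\otimes A^{\beta_n}$, where $A$ is the adjacency matrix of $P_3$; this is a tensor product of exactly $k$ copies of $A$ and $n-k$ copies of the $3\times3$ identity (one for each coordinate with $\beta_i=0$). By the eigenvalue formula (\ref{q0}), every eigenvalue of $A_\beta$ is a product of $k$ numbers chosen from the spectrum $\left\lbrace-\sqrt 2,\,0,\,\sqrt 2\right\rbrace$ of $P_3$, the identity factors contributing $1$. Such a product equals $0$ whenever one of the chosen factors is $0$, and otherwise equals $\pm(\sqrt 2)^k$. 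Hence the distinct eigenvalues of $A_\beta$ all lie in $\left\lbrace0,\,(\sqrt 2)^k,\,-(\sqrt 2)^k\right\rbrace$; crucially no other magnitude can occur, because every nonzero eigenvalue of $P_3$ has the same absolute value $\sqrt 2$.

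Next I would use the spectral decomposition $H_\beta(t)=\sum_r e^{-it\lambda_r}E_r$, where the sum runs over the distinct eigenvalues $\lambda_r\in\left\lbrace0,\pm(\sqrt 2)^k\right\rbrace$ and each $E_r$ is the (real symmetric) orthogonal projection onto the corresponding eigenspace. Evaluating the scalar phase factors at $t=\tau_k=\pi/(\sqrt 2)^k$ gives $e^{-i\tau_k\cdot 0}=1$ and $e^{\mp i\tau_k(\sqrt 2)^k}=e^{\mp i\pi}=-1$ for $\lambda_r=\pm(\sqrt 2)^k$; the same computation at $t=-\tau_k$ produces exactly the same values, since flipping the sign of $t$ merely swaps the roles of the eigenvalues $(\sqrt 2)^k$ and $-(\sqrt 2)^k$, both of which give $-1$ anyway. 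Because the coefficients agree termwise, $H_\beta(-\tau_k)=\sum_r e^{i\tau_k\lambda_r}E_r=\sum_r e^{-i\tau_k\lambda_r}E_r=H_\beta(\tau_k)$.

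I would also remark that the argument can be packaged more slickly: since $A_\beta$ is real and symmetric, $H_\beta(-t)=\overline{H_\beta(t)}$ for all $t$, so the claim is equivalent to the assertion that $H_\beta(\tau_k)$ is a real matrix. This is immediate once one observes that each phase $e^{-i\tau_k\lambda_r}$ equals $\pm1$ and each $E_r$ is real. There is no genuine obstacle here; the one point that must be checked with care is that the spectrum of $A_\beta$ contains no eigenvalue other than $0$ and $\pm(\sqrt 2)^k$, and this is exactly where the special feature of $P_3$ — that its two nonzero eigenvalues are $\pm\sqrt 2$, of equal modulus — is used.
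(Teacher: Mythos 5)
Your proof is correct: the spectrum of $A_{\beta}=A^{\beta_1}\otimes\cdots\otimes A^{\beta_n}$ is contained in $\left\lbrace 0,\;(\sqrt{2})^{k},\;-(\sqrt{2})^{k}\right\rbrace$, so each phase $e^{\mp i\tau_k\lambda_r}$ equals $1$ or $-1$, the spectral sums at $\tau_k$ and $-\tau_k$ agree termwise, and equivalently $H_{\beta}(\tau_k)$ is real so that $H_{\beta}(-\tau_k)=\overline{H_{\beta}(\tau_k)}=H_{\beta}(\tau_k)$. The present paper states this lemma without proof, importing it from \cite{pal}, and your spectral-decomposition argument is precisely the standard one used there, so there is nothing to flag.
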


Now we find the transition matrix of an NEPS of $P_3$ at a specific time depending on the basis of the NEPS.

\begin{theorem}\cite{pal}\label{a3}
Consider $\Omega\subset\Zl_{2}^{n}\setminus\left\lbrace\o\right\rbrace$ such that for all $\beta\in\Omega$, the number $s(\beta)$ is even (or odd). Let $k=\min\limits_{\beta\in\Omega} s(\beta)$ and $\Omega^{*}=\left\lbrace\beta\in\Omega : s(\beta)=k \right\rbrace $. If the transition matrices of NEPS of $P_{3}$ corresponding to $\Omega$ and $\Omega^{*}$ are $H_{\Omega}(t)$ and $H_{\Omega^{*}}(t)$, respectively, then $H_{\Omega}(\tau_{k})=H_{\Omega^{*}}(\tau_{k})$. 
\end{theorem}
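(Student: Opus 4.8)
The plan is to exploit the product formula from Proposition \ref{a1}, which writes $H_{\Omega}(t)=\prod_{\beta\in\Omega}H_{\beta}(t)$, and to show that at the special time $\tau_{k}$ every factor indexed by a tuple of weight strictly larger than $k$ collapses to the identity. First I would apply the eigenvalue formula (\ref{q0}) to the one-element basis $\left\lbrace\beta\right\rbrace$: since each factor $P_{3}$ contributes an eigenvalue from $\left\lbrace-\sqrt{2},0,\sqrt{2}\right\rbrace$, the nonzero eigenvalues of $NEPS\left(P_{3},\ldots,P_{3};\left\lbrace\beta\right\rbrace\right)$ are exactly $\pm\left(\sqrt{2}\right)^{s(\beta)}$, while all remaining eigenvalues are $0$.

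Next I would evaluate a single factor at $t=\tau_{k}$. Writing $A_{\beta}$ for the adjacency matrix of $NEPS\left(P_{3},\ldots,P_{3};\left\lbrace\beta\right\rbrace\right)$ and using its spectral decomposition, $H_{\beta}(\tau_{k})$ acts as $1$ on the kernel of $A_{\beta}$ and as $\exp\!\left(\mp i\pi\left(\sqrt{2}\right)^{s(\beta)-k}\right)$ on the eigenspaces for $\pm\left(\sqrt{2}\right)^{s(\beta)}$, because $\tau_{k}\left(\sqrt{2}\right)^{s(\beta)}=\pi\left(\sqrt{2}\right)^{s(\beta)-k}$. Here the common-parity hypothesis enters decisively: since all weights $s(\beta)$ share the parity of $k$, the exponent $s(\beta)-k$ is even, so $\left(\sqrt{2}\right)^{s(\beta)-k}=2^{(s(\beta)-k)/2}$ is a nonnegative integer. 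For $\beta\in\Omega\setminus\Omega^{*}$ we have $s(\beta)>k$, whence $(s(\beta)-k)/2\geq 1$ and $2^{(s(\beta)-k)/2}$ is an \emph{even} integer; therefore $\exp\!\left(\mp i\pi\cdot(\text{even})\right)=1$ and $H_{\beta}(\tau_{k})=I$.

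Finally I would feed this back into the product: the factors with $s(\beta)>k$ drop out and only those with $s(\beta)=k$, that is $\beta\in\Omega^{*}$, survive, giving $H_{\Omega}(\tau_{k})=\prod_{\beta\in\Omega^{*}}H_{\beta}(\tau_{k})=H_{\Omega^{*}}(\tau_{k})$, as claimed. The main obstacle is purely the phase bookkeeping in the second step: everything reduces to checking that $\left(\sqrt{2}\right)^{s(\beta)-k}$ is an even integer, and this is exactly where the assumption that all weights have the same parity is indispensable. Were two weights of opposite parity present, the corresponding exponent $s(\beta)-k$ would be odd, $\left(\sqrt{2}\right)^{s(\beta)-k}$ would be an irrational multiple of a power of $2$, and the offending factor would fail to reduce to the identity, so the reduction to $\Omega^{*}$ would break down.
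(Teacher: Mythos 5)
Your proof is correct. The paper itself states Theorem \ref{a3} without proof, importing it from \cite{pal}; your argument --- reducing via the product formula of Proposition \ref{a1} and showing that each factor $H_{\beta}(\tau_{k})$ with $s(\beta)>k$ equals $I$, since its nonzero eigenvalues $\pm\left(\sqrt{2}\right)^{s(\beta)}$ give phases $\exp\left(\mp i\pi\, 2^{(s(\beta)-k)/2}\right)=1$ when $s(\beta)-k$ is even and positive --- is precisely the route taken in that reference, so there is nothing to add.
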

In the following result we find that an NEPS with basis containing tuples having hamming weights odd (or even) is essentially periodic.
\begin{theorem}\label{a4}
Suppose $\Omega\subset\Zl_{2}^{n}\setminus\left\lbrace\o\right\rbrace$ satisfies the conditions of Theorem \ref{a3}. Then the graph $NEPS\left(P_{3},\ldots,P_{3};\Omega\right)$ is periodic at $2\tau_k$.
\end{theorem}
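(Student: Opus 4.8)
The plan is to prove the stronger statement that $H_\Omega(2\tau_k)=I$, which immediately gives periodicity at every vertex simultaneously with $\gamma=1$. Writing the spectral decomposition $H_\Omega(t)=\sum_r \exp(-it\lambda_r)E_r$ over the distinct eigenvalues $\lambda_r$ of $A_\Omega$, it suffices to show that $\exp(-2i\tau_k\lambda_r)=1$ for every such $\lambda_r$; then $H_\Omega(2\tau_k)=\sum_r E_r=I$. So the whole problem reduces to understanding the eigenvalues of the NEPS modulo the scaling $2\tau_k=\tfrac{2\pi}{(\sqrt2)^k}$.

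First I would describe each eigenvalue explicitly using equation (\ref{q0}). Since the eigenvalues of $P_3$ are $-\sqrt2,0,\sqrt2$, each product $\lambda_{1j_1}^{\beta_1}\cdots\lambda_{nj_n}^{\beta_n}$ is a product of $s(\beta)$ numbers drawn from $\{-\sqrt2,0,\sqrt2\}$, hence equals $\epsilon_\beta(\sqrt2)^{s(\beta)}$ for some $\epsilon_\beta\in\{-1,0,1\}$ (the value $0$ occurring exactly when one of the relevant factors vanishes). Summing over $\beta\in\Omega$, every eigenvalue of the NEPS has the form $\Lambda=\sum_{\beta\in\Omega}\epsilon_\beta(\sqrt2)^{s(\beta)}$.

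The key step is the integrality argument that exploits the common-parity hypothesis of Theorem \ref{a3}. Because all $s(\beta)$ share one parity and $k=\min_{\beta}s(\beta)$, each exponent satisfies $s(\beta)-k\in 2\Zl_{\geq 0}$, so $(\sqrt2)^{s(\beta)}=(\sqrt2)^k\,2^{(s(\beta)-k)/2}$ with $2^{(s(\beta)-k)/2}$ a positive integer. Factoring out $(\sqrt2)^k$ then yields $\Lambda=(\sqrt2)^k\,m$ with $m=\sum_{\beta}\epsilon_\beta 2^{(s(\beta)-k)/2}\in\Zl$. This is the one place where the hypothesis is essential: without a common parity the factor $(\sqrt2)^{s(\beta)-k}$ need not be rational, and the eigenvalues would no longer be integer multiples of $(\sqrt2)^k$.

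Finally I would combine the two facts. With $2\tau_k=\tfrac{2\pi}{(\sqrt2)^k}$ we get $2\tau_k\lambda_r=2\pi m_r$ for the integer $m_r$ attached to each eigenvalue $\lambda_r$, hence $\exp(-2i\tau_k\lambda_r)=\exp(-2\pi i m_r)=1$. Substituting into the spectral decomposition gives $H_\Omega(2\tau_k)=I$, so $NEPS\left(P_3,\ldots,P_3;\Omega\right)$ is periodic at $2\tau_k$. The argument carries no genuine obstacle beyond the integrality computation; the only care needed is to confirm that every product of $P_3$-eigenvalues collapses to $\pm(\sqrt2)^{s(\beta)}$ or $0$, and that the common-parity condition forces the residual power of $\sqrt2$ to be an even nonnegative integer.
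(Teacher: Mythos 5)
Your proof is correct, and it takes a genuinely different route from the paper's. You argue spectrally: by equation (\ref{q0}) every eigenvalue of $NEPS\left(P_{3},\ldots,P_{3};\Omega\right)$ has the form $\Lambda=\sum_{\beta\in\Omega}\epsilon_{\beta}(\sqrt{2})^{s(\beta)}$ with $\epsilon_{\beta}\in\{-1,0,1\}$, and the common-parity hypothesis forces $s(\beta)-k$ to be a nonnegative even integer, so $(\sqrt{2})^{s(\beta)}=(\sqrt{2})^{k}\,2^{(s(\beta)-k)/2}$ and $\Lambda\in(\sqrt{2})^{k}\Zl$; hence $\exp(-2i\tau_{k}\Lambda)=1$ for every eigenvalue and $H_{\Omega}(2\tau_{k})=I$ follows from the spectral decomposition. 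The paper never computes an eigenvalue: it factors the transition matrix through Proposition \ref{a1}, uses the time symmetry $H_{\beta}(-\tau_{k})=H_{\beta}(\tau_{k})$ of Lemma \ref{a2} to conclude $H_{\Omega^{*}}(2\tau_{k})=I$, and then applies Theorem \ref{a3} (the collapse $H_{\Omega}(\tau_{k})=H_{\Omega^{*}}(\tau_{k})$) to get $H_{\Omega}(2\tau_{k})=\left(H_{\Omega^{*}}(\tau_{k})\right)^{2}=I$. Both arguments prove the same stronger conclusion $H_{\Omega}(2\tau_{k})=I$, i.e., periodicity with $\gamma=1$. Your approach buys self-containedness and transparency: it needs only the eigenvalue formula (\ref{q0}) and it exposes the real mechanism, namely that the spectrum lies in the lattice $(\sqrt{2})^{k}\Zl$, which is the standard spectral criterion for periodicity; the paper's approach is economical within its own framework, since Proposition \ref{a1}, Lemma \ref{a2} and Theorem \ref{a3} are already imported from \cite{pal} and reused elsewhere, but it leaves the eigenvalue structure implicit. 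One point of care in your argument, which you do handle correctly, is that coordinates with $\beta_{i}=0$ contribute $\lambda^{0}=1$ even when the corresponding eigenvalue of $P_{3}$ is $0$ (since $A^{0}=I$), so only the $s(\beta)$ coordinates with $\beta_{i}=1$ enter each product.
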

\begin{proof}
Using Lemma \ref{a2}, for $\beta\in\Omega^*$, we find that $H_{\beta}(-\tau_{k})=H_{\beta}(\tau_{k})$. Further, Proposition \ref{a1} implies that
\begin{eqnarray*}
H_{\Omega^*}(-\tau_k)
& = &\prod\limits_{\beta\in\Omega^*}H_{\beta}(-\tau_k)\\
& = &\prod\limits_{\beta\in\Omega^*}H_{\beta}(\tau_k)\\
& = & H_{\Omega^*}(\tau_k).
\end{eqnarray*}
Therefore we obtain $H_{\Omega^*}(2\tau_k)=I$. Now Theorem \ref{a3} gives
\[H_{\Omega}(2\tau_{k})=\left(H_{\Omega}(\tau_{k})\right)^2=\left(H_{\Omega^{*}}(\tau_{k})\right)^2=H_{\Omega^*}(2\tau_k)=I.\]
Hence $NEPS\left(P_{3},\ldots,P_{3};\Omega\right)$ is periodic at $2\tau_k$.
\end{proof}
Note that if $k$ is even then $\tau_{k}=\frac{\pi}{\left(\sqrt{2}\right)^{k}}=\frac{\pi}{2^{k/2}}$. Recall that if a graph is periodic at $\tau$ then it is periodic at $2^r\tau$ for all non-negative integer $r$. As an implication, we have the following obvious corollary which will be used to find PGST in some NEPS of $P_3$.
\begin{cor}\label{a5}
Let $\Omega\subset\Zl_{2}^{n}\setminus\left\lbrace\o\right\rbrace$ be such that the hamming weight of $\beta$ for each $\beta\in\Omega$ is even. Then the NEPS of $P_{3}$ corresponding to $\Omega$ is periodic at $\pi$.
\end{cor}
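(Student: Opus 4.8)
The plan is to deduce this directly from Theorem \ref{a4} together with the elementary doubling property of periodicity recalled just above the corollary, namely that periodicity at $\tau$ forces periodicity at $2^r\tau$ for every non-negative integer $r$. So the corollary should fall out as a book-keeping consequence once the arithmetic of the time parameter is arranged correctly.

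First I would check that $\Omega$ meets the hypotheses of Theorem \ref{a4}. Since every $\beta\in\Omega$ has even hamming weight, all the weights share a common parity, so Theorem \ref{a4} applies. Setting $k=\min_{\beta\in\Omega}s(\beta)$, the fact that $\Omega\subset\Zl_2^n\setminus\{\o\}$ forces $s(\beta)\geq 1$ for each $\beta$, and since all weights are even, $k$ is an even integer with $k\geq 2$. Theorem \ref{a4} then tells me that $NEPS(P_3,\ldots,P_3;\Omega)$ is periodic at $2\tau_k$, and because $k$ is even we may rewrite $\tau_k=\pi/2^{k/2}$, so that $2\tau_k=\pi/2^{(k/2)-1}$.

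Next I would apply the doubling property with the specific choice $r=(k/2)-1$. Since $k$ is even and $k\geq 2$, this $r$ is a genuine non-negative integer, so periodicity at $2\tau_k$ yields periodicity at
\[2^{(k/2)-1}\cdot 2\tau_k=2^{(k/2)-1}\cdot\frac{\pi}{2^{(k/2)-1}}=\pi,\]
which is exactly the desired conclusion.

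The only point requiring any care is verifying that the exponent $r=(k/2)-1$ is a legitimate non-negative integer; this is precisely where the even-weight hypothesis (which makes $k$ even) and the exclusion of $\o$ from $\Omega$ (which forces $k\geq 2$, hence $r\geq 0$) enter. Everything else is a direct substitution, so I anticipate no real obstacle: the content of the corollary is entirely carried by Theorem \ref{a4}, and the remaining work is simply to pick the right power of two to scale $2\tau_k$ up to $\pi$.
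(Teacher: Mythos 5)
Your proposal is correct and follows exactly the route the paper intends: Theorem \ref{a4} gives periodicity at $2\tau_k$, and the doubling remark preceding the corollary (periodicity at $\tau$ implies periodicity at $2^r\tau$) with $r=(k/2)-1$ scales this up to $\pi$, using that $k$ is even and $k\geq 2$. This matches the paper's argument, which states the corollary as an immediate consequence of precisely these two facts.
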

Consider an NEPS with basis $\Omega$. Let $M(\Omega)$ be the matrix formed by writing the vectors in $\Omega$ as its rows. We denote the rank of $M(\Omega)$ over $\Zl_{2}$ by $r(\Omega)$.
\begin{theorem}\cite{stev1}\label{a6}
Let $B_{1},\ldots, B_{n}$ be connected bipartite graphs. Then $NEPS(B_{1},\ldots, B_{n}; \Omega)$ is connected if and only if the rank $r(\Omega)=n$.
\end{theorem}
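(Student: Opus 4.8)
The plan is to count the connected components of $NEPS(B_1,\ldots,B_n;\Omega)$ spectrally, by identifying this count with the multiplicity of the largest eigenvalue $\Lambda_{\max}$ of the (symmetric, entrywise nonnegative) adjacency matrix $A_\Omega$. The enabling observation is the following: if the $\Lambda_{\max}$-eigenspace of a graph contains a nowhere-zero vector, then the multiplicity of $\Lambda_{\max}$ equals the number of connected components. Indeed, for a graph with components $C_1,\ldots,C_m$ the $\Lambda_{\max}$-eigenspace is the direct sum of the $\Lambda_{\max}$-eigenspaces of those $C_k$ whose Perron value equals $\Lambda_{\max}$, each such summand being one-dimensional by Perron--Frobenius and supported on a single component; a nowhere-zero eigenvector restricts to a nonzero eigenvector on every $C_k$, forcing each component to attain $\Lambda_{\max}$ and hence the multiplicity to equal $m$. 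Thus, once a nowhere-zero top eigenvector is exhibited, connectivity is equivalent to $\Lambda_{\max}$ being simple, and it remains to compute this multiplicity.

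First I would record the bipartite Perron--Frobenius input. Since each $B_i$ is connected, its spectral radius $\rho_i$ is a simple eigenvalue with a strictly positive eigenvector $\x_i^{+}$; since $B_i$ is bipartite its spectrum is symmetric about $0$, so $-\rho_i$ is also a simple eigenvalue, with eigenvector $\x_i^{-}$ obtained from $\x_i^{+}$ by reversing signs on one side of the bipartition, and $\pm\rho_i$ are the only eigenvalues of $B_i$ of maximum modulus. Both $\x_i^{+}$ and $\x_i^{-}$ are nowhere zero, so every tensor product $\x_1^{\varepsilon_1}\otimes\cdots\otimes\x_n^{\varepsilon_n}$ with $\varepsilon_i\in\{+,-\}$ is nowhere zero; in particular the positive vector $\x_1^{+}\otimes\cdots\otimes\x_n^{+}$ supplies the nowhere-zero top eigenvector required above.

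Next I would compute the multiplicity using the eigenvalue formula (\ref{q0}). For any choice $j_1,\ldots,j_n$ one has $\Lambda_{j_1\ldots j_n}=\sum_{\beta\in\Omega}\prod_i\lambda_{ij_i}^{\beta_i}\le\sum_{\beta\in\Omega}\prod_i\rho_i^{\beta_i}=:\Lambda_{\max}$, and because $\pm\rho_i$ are the unique eigenvalues of $B_i$ of maximum modulus, equality forces $\lambda_{ij_i}=\varepsilon_i\rho_i$ for every coordinate appearing in some $\beta$, with the resulting sign pattern $\sigma\in\Zl_2^n$ (where $\sigma_i=0$ for $\varepsilon_i=+1$) satisfying $\sum_i\sigma_i\beta_i\equiv 0\pmod 2$ for all $\beta\in\Omega$; that is, $\sigma\in\ker M(\Omega)$ over $\Zl_2$. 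The associated eigenvectors $\x_1^{\varepsilon_1}\otimes\cdots\otimes\x_n^{\varepsilon_n}$ are linearly independent, so the multiplicity of $\Lambda_{\max}$ equals $|\ker M(\Omega)|=2^{\,n-r(\Omega)}$. Combining with the first paragraph, the number of components is $2^{\,n-r(\Omega)}$, which equals $1$ exactly when $r(\Omega)=n$.

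I expect the main obstacle to be the bridge between spectrum and components in the first paragraph, namely that a nowhere-zero top eigenvector forces the eigenvalue multiplicity to coincide with the component count; this requires the per-component Perron--Frobenius simplicity together with the support decomposition of eigenvectors over components. One must also dispatch the easy edge cases—most notably a coordinate absent from every $\beta$, which simultaneously drops $r(\Omega)$ below $n$ and inflates the multiplicity, so that the equivalence is preserved—and the mild standing assumption that each factor carries at least one edge (so that $\pm\rho_i$ are genuinely the extremal eigenvalues). The remaining quantitative step, pinning the equality case of the bound exactly to $\ker M(\Omega)$, is routine once the bipartite structure of the extremal eigenvalues $\pm\rho_i$ is in hand.
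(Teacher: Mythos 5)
Your proof is correct, but note that the paper itself offers no proof of this statement---it is quoted from \cite{stev1}---so the only meaningful comparison is with that source, and your argument is essentially Stevanovi\'c's original one: identify the number of components with the multiplicity of the index $\Lambda_{\max}$ (legitimate here because the Perron tensor $\x_1^{+}\otimes\cdots\otimes\x_n^{+}$ is nowhere zero), then use bipartite Perron--Frobenius to show the tuples attaining $\Lambda_{\max}$ are exactly the sign patterns in $\ker M(\Omega)$ over $\Zl_2$, giving multiplicity $2^{\,n-r(\Omega)}$. One point worth making explicit rather than leaving as a ``mild standing assumption'': the requirement that each $B_i$ has at least one edge is not merely needed for your proof, it is needed for the theorem itself, since e.g.\ $NEPS(K_1,K_2;\{(0,1)\})$ is connected while $r(\Omega)=1<2$; so ``connected bipartite'' must be read as excluding $K_1$, which is the convention of \cite{stev1}.
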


The following result gives a sufficient condition for an NEPS of $P_{3}$ to be connected and exhibit perfect state transfer. Since we are interested in connected graphs we impose the condition $r(\Omega)=n$ in the following result only to ensure that the graphs involved are connected.

\begin{theorem}\cite{pal}\label{a7} Consider $\Omega\subset\Zl_{2}^{n}\setminus\left\lbrace\o\right\rbrace$ such that $r(\Omega)=n$. Also, for $\beta\in\Omega$, assume that $s(\beta)$ is even (or odd). Suppose that $k=\min\limits_{\beta\in\Omega} s(\beta)$ and $\Omega^{*}=\left\lbrace\beta\in\Omega : s(\beta)=k \right\rbrace $. If $\sum\limits_{\beta\in\Omega^{*}}\beta\neq \mathbf{0}$ in $\Zl_{2}^{n}$ then $NEPS\left(P_{3},\ldots,P_{3};\Omega\right)$ allows perfect state transfer at time $\frac{\pi}{(\sqrt{2})^{k}}$.
\end{theorem}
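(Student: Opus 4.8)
The plan is to push everything down to the minimal-weight part $\Omega^{*}$, factor the transition matrix into a product of commuting reflections, and then evaluate that product on one cleverly chosen initial vertex. Connectivity comes for free: $P_3$ is connected and bipartite and $r(\Omega)=n$, so Theorem~\ref{a6} makes $NEPS(P_3,\ldots,P_3;\Omega)$ connected. For the transfer, set $\tau_k=\pi/(\sqrt2)^{k}$. By Theorem~\ref{a3} I may replace $\Omega$ by $\Omega^{*}$, since $H_{\Omega}(\tau_k)=H_{\Omega^{*}}(\tau_k)$, so it suffices to analyse $H_{\Omega^{*}}(\tau_k)$; and by Proposition~\ref{a1} this equals $\prod_{\beta\in\Omega^{*}}H_{\beta}(\tau_k)$. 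These factors commute, because the matrices $A_{\beta}$ pairwise commute (in each tensor slot they are powers of the same matrix $A$).

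Next I would evaluate one factor. Write $Q=E_1+E_3$ for the projection onto the $\pm\sqrt2$ eigenspaces of $P_3$ and $E_2$ for the projection onto its $0$ eigenspace. By (\ref{q0}) the eigenvalues of $NEPS(P_3,\ldots,P_3;\{\beta\})$ are the products $\prod_{i\in\mathrm{supp}(\beta)}\lambda_{j_i}$ with each $\lambda_{j_i}\in\{-\sqrt2,0,\sqrt2\}$. At $t=\tau_k$ a product containing a zero factor contributes the phase $+1$, while a product $\pm(\sqrt2)^{k}$ contributes $\exp(\mp i\pi)=-1$; hence $H_{\beta}(\tau_k)=I-2P_{\beta}$ with $P_{\beta}=\bigotimes_{i\in\mathrm{supp}(\beta)}Q\otimes\bigotimes_{i\notin\mathrm{supp}(\beta)}I$. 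Since the $P_{\beta}$ are assembled from the commuting projections $Q$ and $E_2$ on the factors, I can diagonalise the whole product at once: for $Z\subseteq\{1,\ldots,n\}$ put $\Pi_Z=\bigotimes_{i\in Z}E_2\otimes\bigotimes_{i\notin Z}Q$, so that $\sum_Z\Pi_Z=I$ and
\[H_{\Omega^{*}}(\tau_k)=\sum_{Z}\sigma(Z)\,\Pi_Z,\qquad \sigma(Z)=(-1)^{\,|\{\beta\in\Omega^{*}\,:\,\mathrm{supp}(\beta)\cap Z=\emptyset\}|}.\]

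The decisive step, which I expect to be the main obstacle, is the choice of initial vertex. A corner vertex (all coordinates among the two end vertices of $P_3$) does not work once $k\ge 2$: the signs $\sigma(Z)$ then fail to assemble, and $H_{\Omega^{*}}(\tau_k)e_u$ is a genuine superposition. Instead I would exploit the hypothesis $\sum_{\beta\in\Omega^{*}}\beta\neq\o$. Its support $T$ is nonempty, so I fix $i_0\in T$; by definition $i_0$ lies in an odd number of the sets $\mathrm{supp}(\beta)$. Let $u$ be the vertex whose $i_0$-th coordinate is an end vertex of $P_3$ and all of whose other coordinates equal the central vertex of $P_3$. Because $E_2$ annihilates the central vertex, $\Pi_Z e_u=\o$ unless $Z\subseteq\{i_0\}$, so only $Z=\emptyset$ and $Z=\{i_0\}$ survive; and since $i_0$ lies in an odd number of supports, $\sigma(\emptyset)$ and $\sigma(\{i_0\})$ have opposite signs. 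A short computation with $Q$ and $E_2$ then collapses the two surviving terms to $(-1)^{|\Omega^{*}|}e_v$, where $v$ is obtained from $u$ by swapping the two end vertices of $P_3$ in coordinate $i_0$. As $v\neq u$ and $|(-1)^{|\Omega^{*}|}|=1$, this is perfect state transfer from $u$ to $v$ at time $\tau_k$. The only genuine work is this two-term collapse together with tracking the global phase; identifying $i_0$ from $T$ is exactly where the hypothesis $\sum_{\beta}\beta\neq\o$ is used, and it is what guarantees $v\neq u$ rather than mere periodicity.
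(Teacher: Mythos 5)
Your proof is correct, and there is in fact nothing in this paper to compare it against: Theorem \ref{a7} is imported from \cite{pal} without proof, so your argument supplies what the paper only cites. Every step checks out. The reduction $H_{\Omega}(\tau_k)=H_{\Omega^{*}}(\tau_k)$ is exactly Theorem \ref{a3} (the same-parity hypothesis is used there and only there), the factorization is Proposition \ref{a1}, and the identity $H_{\beta}(\tau_k)=I-2P_{\beta}$ follows from (\ref{q0}) as you say, since for $s(\beta)=k$ every nonzero eigenvalue of $NEPS\left(P_3,\ldots,P_3;\{\beta\}\right)$ equals $\pm(\sqrt{2})^{k}$ and acquires phase $-1$ at $\tau_k$, while the zero eigenvalue acquires $+1$. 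Your decomposition $H_{\Omega^{*}}(\tau_k)=\sum_{Z}\sigma(Z)\Pi_{Z}$ is right because $P_{\beta}\Pi_{Z}$ equals $\Pi_{Z}$ or $0$ according as $\mathrm{supp}(\beta)\cap Z$ is empty or not, and the vertex choice then does all the work: $E_2$ annihilates the central vertex of $P_3$, so only $Z=\emptyset$ and $Z=\{i_0\}$ contribute to $e_u$; the hypothesis $\sum_{\beta\in\Omega^{*}}\beta\neq\o$ enters precisely in making $i_0$ lie in an odd number of supports, so the two surviving signs are opposite; and $(Q-E_2)e_1=(I-2E_2)e_1=e_3$ collapses the sum to $H_{\Omega}(\tau_k)e_u=(-1)^{|\Omega^{*}|}e_v$ with $v\neq u$, which is perfect state transfer. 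As to the relation with the cited source: for $P_3$ one has $Q=E_1+E_3=\tfrac{1}{2}A^{2}$, so your reflection $I-2P_{\beta}$ is the same matrix as $I-\tfrac{1}{2^{k-1}}A_{\beta}^{2}$; the computation in \cite{pal} is carried out in that adjacency-power form, whereas your bookkeeping with the orthogonal projections $\Pi_Z$ diagonalizes the whole product at once, which is arguably cleaner. Your transfer pair (central vertex in every coordinate except $i_0$) need not be the pair exhibited in \cite{pal} --- for instance in the Cartesian-product case $k=1$ the classical pair is corner-to-corner --- but PST pairs are not unique and the theorem asserts only existence, so this is immaterial. Finally, your closing remark is also consistent with the paper: if $\sum_{\beta\in\Omega^{*}}\beta=\o$, then in your computation every $i_0$ meets an even number of supports, the two signs agree, and your vertex is merely periodic, which matches the paper's observation in its Conclusions that this residual class is periodic (Theorem \ref{a4}) and its PST status is open.
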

Consider the following approximation theorem by Kronecker which plays a crucial role in finding PGST in the later results.

\begin{theorem}[Kronecker's approximation theorem]\cite{apo}\label{c1}
Let $\theta$ be an irrational number and suppose $\alpha$ is a real number. For every $\delta>0$ there are integers $p$ and $q$ such that
\[|p\theta-q-\alpha|<\delta.\]
\end{theorem}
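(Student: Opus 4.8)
The plan is to recognize this as the one-dimensional case of Kronecker's theorem and to prove it by the classical two-step argument: first produce a single integer multiple of $\theta$ whose distance to the nearest integer is smaller than $\delta$, and then use repeated copies of that multiple to step across the real line in increments too small to overshoot $\alpha$.

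First I would establish a homogeneous approximation via the pigeonhole principle (Dirichlet's argument). Fix an integer $N$ with $N>1/\delta$ and consider the $N+1$ numbers $\{0\},\{\theta\},\{2\theta\},\ldots,\{N\theta\}$, where $\{x\}$ denotes the fractional part, all lying in $[0,1)$. Partitioning $[0,1)$ into the $N$ subintervals $[\,(m-1)/N,\,m/N\,)$ for $m=1,\ldots,N$, two of these fractional parts, say $\{i\theta\}$ and $\{j\theta\}$ with $i<j$, must fall in the same subinterval. Setting $k=j-i$ then yields an integer $k\neq 0$ with $\|k\theta\|<1/N<\delta$, where $\|\cdot\|$ denotes the distance to the nearest integer. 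This is the one place where the hypothesis enters essentially: since $\theta$ is irrational, $k\theta$ is not an integer, so in fact $0<\|k\theta\|<\delta$.

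Next I would carry out the sweeping step. Write $c=k\theta$ and let $d$ be the nearest integer to $c$, so that $\eta:=c-d$ satisfies $0<|\eta|<\delta$. As $n$ ranges over $\Zl$, the numbers $n\eta$ form an arithmetic progression in $\Rl$ with consecutive gap $|\eta|<\delta$; such a progression is $\delta$-dense in $\Rl$, so there is an integer $n$ with $|n\eta-\alpha|<\delta$. Unwinding the definitions, $n\eta=nk\theta-nd$, and hence with $p:=nk$ and $q:=nd$ (both integers) we obtain $|p\theta-q-\alpha|=|n\eta-\alpha|<\delta$, as required.

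I expect the pigeonhole step to be the conceptual heart of the argument, since it is what converts the qualitative fact that $\theta$ is irrational into a quantitative small nonzero value of $\|k\theta\|$. The role of irrationality is precisely to guarantee $\eta\neq 0$, without which the progression $\{n\eta\}$ would collapse to a single point and could not approximate an arbitrary target $\alpha$. The second (sweeping) step is then elementary, requiring only the observation that an arithmetic progression with step smaller than $\delta$ meets every interval of length $\delta$.
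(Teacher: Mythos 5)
The paper does not actually prove this statement: it is quoted as a known result from Apostol's book \cite{apo} and is used as a black box in Theorems \ref{c2} and \ref{f}, so there is no internal proof to compare yours against. Your argument is the standard elementary proof of the one-dimensional Kronecker theorem, and it is correct. The pigeonhole step is sound: with $N>1/\delta$, two of the $N+1$ fractional parts $\{0\},\{\theta\},\ldots,\{N\theta\}$ must land in one of the $N$ subintervals of length $1/N$, so $k=j-i$ satisfies $0<\|k\theta\|<1/N<\delta$, and irrationality of $\theta$ is invoked exactly where it is needed, namely to rule out $\|k\theta\|=0$. The sweeping step is also sound: since $\eta=k\theta-d$ satisfies $0<|\eta|<\delta$, the progression $\{n\eta : n\in\Zl\}$ is unbounded in both directions with consecutive gaps $|\eta|<\delta$, hence some $n\eta$ lies within $\delta$ of $\alpha$, and unwinding gives integers $p=nk$, $q=nd$ with $|p\theta-q-\alpha|<\delta$. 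One small remark: some textbook formulations (including Apostol's) additionally require $p>0$ or produce infinitely many pairs $(p,q)$; your construction allows $n$, and hence $p$, to be negative or zero, but the statement as the paper quotes it asks only for integers, and the paper's applications in Theorem \ref{c2} need nothing more, so this is not a gap.
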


Suppose $\Omega_e$ and $\Omega_o$ are non-empty subsets of $\Zl_{2}^{n}\setminus\left\lbrace\o\right\rbrace$ containing tuples with hamming weights even and odd, respectively.
\begin{theorem}\label{c2}
Let $\Omega=\Omega_e\cup\Omega_o\subset\Zl_{2}^{n}\setminus\left\lbrace\o\right\rbrace$ be such that both $\Omega_e$, $\Omega_o$ are non-empty and $r(\Omega)=n$. Assume that $k=\min\limits_{\beta\in\Omega_e} s(\beta)$ and $l=\min\limits_{\beta\in\Omega_o} s(\beta)$ with $\Omega_e^{*}=\left\lbrace\beta\in\Omega_e : s(\beta)=k \right\rbrace $ and $\Omega_o^{*}=\left\lbrace\beta\in\Omega_o : s(\beta)=l \right\rbrace $. Then pretty good state transfer occurs in $NEPS\left(P_{3},\ldots,P_{3};\Omega\right)$ if any one of the following conditions holds:
\begin{enumerate}
\item  $\sum\limits_{\beta\in\Omega_o^{*}}\beta\neq \mathbf{0}$ in $\Zl_{2}^{n}$, or
\item $\sum\limits_{\beta\in\Omega_e^{*}}\beta\neq \mathbf{0}$ in $\Zl_{2}^{n}$.
\end{enumerate}
\end{theorem}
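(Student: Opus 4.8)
The plan is to factor the transition matrix and play the even and odd parts against each other. By Proposition~\ref{a1}, $H_{\Omega}(t)=H_{\Omega_e}(t)\,H_{\Omega_o}(t)$, the two factors commuting because all the tensor summands of $A_{\Omega}$ are simultaneously diagonalizable. The structural heart of the argument is that the two factors run on incommensurate clocks. For $\beta\in\Omega_e$ the weight $s(\beta)$ is even and at least $2$, so each nonzero product $\prod_{i:\beta_i=1}\lambda_{j_i}$ equals $\pm 2^{s(\beta)/2}$, an even integer; hence every eigenvalue of $NEPS\left(P_3,\ldots,P_3;\Omega_e\right)$ is an even integer and $H_{\Omega_e}(\pi)=I$, sharpening Corollary~\ref{a5}. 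Dually, for $\beta\in\Omega_o$ the weight is odd, so every eigenvalue of $NEPS\left(P_3,\ldots,P_3;\Omega_o\right)$ lies in $\sqrt{2}\,\Zl$ and $H_{\Omega_o}\!\left(\pi\sqrt{2}\right)=I$. Thus $H_{\Omega_e}$ is $\pi$-periodic, $H_{\Omega_o}$ is $\pi\sqrt{2}$-periodic, and the ratio of the periods is the irrational number $\sqrt{2}$.

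Suppose condition~$(1)$ holds. The computation in the proof of Theorem~\ref{a7} is an identity for the transition matrix of the odd sub-NEPS and does not require its connectedness, so it yields a vertex $v$ and a unit scalar $\gamma$ with $H_{\Omega_o}(\tau_l)e_u=\gamma\,e_v$. I would hold the odd factor pinned at this value by sliding the time through full odd-periods: set $t=\tau_l+p\,\pi\sqrt{2}$ so that $H_{\Omega_o}(t)=H_{\Omega_o}(\tau_l)$ exactly. It then remains to drive the even factor to the identity. Since $\tau_l/\pi=2^{-l/2}$, Kronecker's theorem (Theorem~\ref{c1}) with $\theta=\sqrt{2}$ and $\alpha=-2^{-l/2}$ produces integers $p,q$ with $\left|p\sqrt{2}-q+2^{-l/2}\right|<\delta$, that is $\left|t-q\pi\right|<\pi\delta$. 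By continuity of the entire map $t\mapsto H_{\Omega_e}(t)$ together with $H_{\Omega_e}(q\pi)=I$, the even factor is within $O(\delta)$ of $I$, so $H_{\Omega}(t)e_u=\gamma\,H_{\Omega_e}(t)e_v$ is within $O(\delta)$ of $\gamma e_v$, giving $\big|\,|e_v^{T}H_{\Omega}(t)e_u|-1\,\big|<\epsilon$ and hence PGST from $u$ to $v$.

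Condition~$(2)$ is the mirror image, with the roles of the two factors exchanged. Here Theorem~\ref{a7} gives exact perfect state transfer of the even factor at $\tau_k$, say $H_{\Omega_e}(\tau_k)e_u=\gamma\,e_v$; I slide through even-periods, $t=\tau_k+p\,\pi$, keeping $H_{\Omega_e}(t)=H_{\Omega_e}(\tau_k)$, and apply Kronecker with $\theta=\sqrt{2}$ and $\alpha=2^{-k/2}=\tau_k/\pi$ to obtain $p,q$ with $\left|q\sqrt{2}-p-2^{-k/2}\right|<\delta$, i.e. $t$ within $\pi\delta$ of the odd-period multiple $q\pi\sqrt{2}$. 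Then $H_{\Omega_o}(t)$ is within $O(\delta)$ of $H_{\Omega_o}\!\left(q\pi\sqrt{2}\right)=I$, and $H_{\Omega}(t)e_u=H_{\Omega_e}(t)H_{\Omega_o}(t)e_u$ stays close to $\gamma e_v$, again yielding PGST.

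The main obstacle is precisely what Theorem~\ref{p2} reflects: the two factors never simultaneously sit at a perfect-transfer configuration, so no single time can give exact state transfer, and one is forced into an approximation argument. The device that makes this work is to keep one factor exactly at its transfer value using its own periodicity, and only then to exploit the irrationality of $\sqrt{2}$---through Kronecker's theorem---to push the other factor arbitrarily close to a scalar multiple of the identity. The two delicate points are checking that one of the sum conditions indeed forces exact transfer of the corresponding factor (inherited from Theorem~\ref{a7}) and that ``approximately scalar'' survives the matrix product, which follows from the operator-norm continuity of the transition matrices.
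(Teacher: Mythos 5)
Your proof is correct, and it rests on the same pillars as the paper's---the factorization $H_{\Omega}(t)=H_{\Omega_e}(t)H_{\Omega_o}(t)$ from Proposition~\ref{a1}, exact perfect state transfer on one factor via Theorem~\ref{a7} (you rightly note that the hypothesis $r(\Omega)=n$ there serves only to guarantee connectedness), and Kronecker's theorem (Theorem~\ref{c1})---but your execution is the mirror image of the paper's. The paper chooses times at which the factor \emph{without} the transfer hypothesis is exactly the identity and lets the transferring factor be only approximately at its transfer configuration: in its Case I the time is $q\pi$, Kronecker-close to an odd multiple $(2p-1)\tau_l$, with the error controlled through uniform continuity of the scalar function $f(t)=|e_u^TH_{\Omega_o}(t)e_v|$; in its Case II the time is $2p\tau_l$, close to $q\pi+\tau_k$. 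You do the opposite: you pin the transferring factor exactly at its transfer value by translating along its own period ($t=\tau_l+p\pi\sqrt{2}$, respectively $t=\tau_k+p\pi$) and use Kronecker to push the other factor into an operator-norm neighbourhood of $I$. The two routes are logically equivalent; yours needs the (easy) remark that being approximately the identity survives multiplication by a unitary, while the paper's needs only scalar continuity. What your write-up genuinely adds is the direct spectral justification of the two clocks: every eigenvalue of the even NEPS is an even integer and every eigenvalue of the odd NEPS lies in $\sqrt{2}\,\Zl$, whence $H_{\Omega_e}(\pi)=I$ and $H_{\Omega_o}(\pi\sqrt{2})=I$ exactly. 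This makes explicit the phase $\gamma=1$ behind the paper's assertion $H_{\Omega_e}(q\pi)=I$, which Corollary~\ref{a5} by itself (periodicity, \emph{i.e.} $H=\gamma I$) does not literally supply and which the paper leaves implicit in the proof of Theorem~\ref{a4}.
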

\begin{proof}
Proposition \ref{a1} implies that $H_{\Omega}(t)=H_{\Omega_e}(t)H_{\Omega_o}(t)$. As $k$ is even, by Corollary \ref{a5}, we find that $NEPS\left(P_{3},\ldots,P_{3};\Omega_e\right)$ is periodic at $\pi$ and hence for all integer $q$ we have $H_{\Omega_e}(q\pi)=I$. Let us denote $\tau=\frac{\pi}{(\sqrt{2})^{l}}$ and $\eta=\frac{\pi}{(\sqrt{2})^{k}}$.\\
\textbf{Case I:} Assume that $\sum\limits_{\beta\in\Omega_o^{*}}\beta\neq \mathbf{0}$ in $\Zl_{2}^{n}$. By Theorem \ref{a7}, the graph $NEPS\left(P_{3},\ldots,P_{3};\Omega_o\right)$ admits perfect state transfer at $\tau$, say, between the pair of vertices $u$, $v$, \emph{i.e}, $|e_u^TH_{\Omega_o}(\tau)e_v|=1$. Let us consider
\[f(t)=|e^T_uH_{\Omega_o}(t)e_v|,\]
which is necessarily a continuous function. By Theorem \ref{a4}, the function $f(t)$ is also periodic with period $2\tau$ and therefore $f(t)$ is uniformly continuous on $\Rl$. So for $\epsilon>0$, there exist $\delta>0$ such that $|t-t'|<\delta$ implies $|f(t)-f(t')|<\epsilon$. Consider $\alpha=\frac{1}{(\sqrt{2})^{l}}$ and $\theta=2\alpha$. Since $l$ is an odd number the chosen number $\theta$ is indeed an irrational number. By Theorem \ref{c1}, for $\delta>0$ there exists integers $p$ and $q$ such that $|p\theta-q-\alpha|<\frac{\delta}{\pi}$, \emph{i.e}, $|(2p-1)\alpha-q|<\frac{\delta}{\pi}$. Now $|(2p-1)\tau-q\pi|<\delta$ implies that
\begin{center}
$|f\left((2p-1)\tau\right)-f(q\pi)|<\epsilon$.
\end{center}
Now notice that $f\left((2p-1)\tau\right)=f(\tau)=1$ and hence $|f(q\pi)-1|<\epsilon$. Therefore we obtain
\begin{center}
$H_{\Omega}(q\pi)=H_{\Omega_e}(q\pi)H_{\Omega_o}(q\pi)=H_{\Omega_o}(q\pi),$
\end{center}
which in turn implies that $f(q\pi)=|e_u^TH_{\Omega}(q\pi)e_v|$. Thus for $\epsilon>0$ there exists $q\pi\in\Rl$ such that 
\begin{center}
$||e_u^TH_{\Omega}(q\pi)e_v|-1|<\epsilon.$
\end{center}
So $NEPS\left(P_{3},\ldots,P_{3};\Omega\right)$ has PGST between the pair of vertices $u$ and $v$.\\
\textbf{Case II:} Suppose $\sum\limits_{\beta\in\Omega_e^{*}}\beta\neq \mathbf{0}$ in $\Zl_{2}^{n}$. By Theorem \ref{a7}, the graph $NEPS\left(P_{3},\ldots,P_{3};\Omega_e\right)$ exhibits perfect state transfer at $\eta$, say, between the pair of vertices $u$ and $v$. Consider
\[g(t)=|e^T_uH_{\Omega_e}(t)e_v|.\]
By the same argument as in Case I, the function $g(t)$ is uniformly continuous. Therefore for $\epsilon>0$, there exist $\delta>0$ so that $|t-t'|<\delta$ implies $|g(t)-g(t')|<\epsilon$. Notice that for every integer $q$, we have $g(q\pi+\eta)=g(\eta)=1$. Consider $\theta=\frac{2}{(\sqrt{2})^{l}}$, which is an irrational number as $l$ is an odd natural number. Also let $\alpha=\frac{1}{\left(\sqrt{2}\right)^k}$. Then by Theorem \ref{c1}, for $\delta>0$ there exists integers $p$ and $q$ such that $|p\theta-q-\alpha|<\frac{\delta}{\pi}$. Now $|2p\tau-(q\pi+\eta)|<\delta$ implies that
\begin{center}
$|g\left(2p\tau\right)-g(q\pi+\eta)|<\epsilon$, \emph{i.e}, $|g(2p\tau)-1|<\epsilon$.
\end{center}
Finally we have \[H_{\Omega}(2p\tau)=H_{\Omega_e}(2p\tau)H_{\Omega_o}(2p\tau)=H_{\Omega_e}(2p\tau),\;\text{as } H_{\Omega_o}(2p\tau)=I.\]
So for each $\epsilon>0$ there exists $2p\tau\in\Rl$ so that $||e_u^TH_{\Omega}(2p\tau)e_v|-1|<\epsilon$. Hence PGST occurs between the pair of vertices $u$ and $v$ in $NEPS\left(P_{3},\ldots,P_{3};\Omega\right)$.
\end{proof}

Thus we find an infinite class of graphs allowing PGST. In the following section we find some other graphs exhibiting PGST.

\section{Pretty good state transfer on Cartesian products}

The Cartesian product of two graphs $G_{1}$ and $G_{2}$ with vertex sets $V_{1}$ and $V_{2}$ is the graph $G_{1}\square G_{2}$, with vertex set $V_{1}\times V_{2}$. Two vertices $(u_{1},u_{2})$ and $(v_{1},v_{2})$ are adjacent in $G_{1}\square G_{2}$ if and only if either $u_{1}$ is adjacent to $v_{1}$ in $G_{1}$ and $u_{2}=v_{2}$, or $u_{1}=v_{1}$ and $u_{2}$ is adjacent to $v_{2}$ in $G_{2}$. The transition matrix of a Cartesian product of two graphs is given by the following result.
\begin{lem}\cite{chr2}
Let $G_1$ and $G_2$ be two graphs and suppose $G_1\square G_2$ is the Cartesian product of $G_1$ and $G_2$. If $G_1$ and $G_2$ have the transition matrices $H_{G_1}(t)$ and $H_{G_2}(t)$, respectively, then the transition matrix of $G_1\square G_2$ is\[H_{G_1\square G_2}(t)=H_{G_1}(t)\otimes H_{G_2}(t).\]
\end{lem}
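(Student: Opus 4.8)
The plan is to start from the standard decomposition of the adjacency matrix of a Cartesian product. Writing $A_1$ and $A_2$ for the adjacency matrices of $G_1$ and $G_2$ and $I_1$, $I_2$ for the identity matrices of the corresponding orders, the adjacency rule for $G_1\square G_2$ gives directly
\[A_{G_1\square G_2}=A_1\otimes I_2+I_1\otimes A_2.\]
Indeed, $(u_1,u_2)$ is adjacent to $(v_1,v_2)$ exactly when either $u_1$ is adjacent to $v_1$ with $u_2=v_2$, which is recorded by $A_1\otimes I_2$, or $u_1=v_1$ with $u_2$ adjacent to $v_2$, which is recorded by $I_1\otimes A_2$. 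First I would establish this identity, since everything else rests on it.

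The key observation is that the two summands commute. Using the mixed-product property of the Kronecker product, $(A\otimes B)(C\otimes D)=(AC)\otimes(BD)$, one obtains
\[(A_1\otimes I_2)(I_1\otimes A_2)=A_1\otimes A_2=(I_1\otimes A_2)(A_1\otimes I_2).\]
Because the two matrices commute, the exponential of their sum factors as the product of the exponentials, so
\[H_{G_1\square G_2}(t)=\exp\bigl(-it(A_1\otimes I_2+I_1\otimes A_2)\bigr)=\exp(-it\,A_1\otimes I_2)\,\exp(-it\,I_1\otimes A_2).\]

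Next I would evaluate each factor separately. From $(A_1\otimes I_2)^n=A_1^{\,n}\otimes I_2$ and the power-series definition of the matrix exponential, it follows term by term that $\exp(-it\,A_1\otimes I_2)=\exp(-it\,A_1)\otimes I_2=H_{G_1}(t)\otimes I_2$, and similarly $\exp(-it\,I_1\otimes A_2)=I_1\otimes H_{G_2}(t)$. Applying the mixed-product property once more yields
\[(H_{G_1}(t)\otimes I_2)(I_1\otimes H_{G_2}(t))=H_{G_1}(t)\otimes H_{G_2}(t),\]
which is the asserted identity.

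I do not expect a serious obstacle here; the argument is essentially bookkeeping with Kronecker products. The one place that genuinely requires care is the factorization of the exponential of a sum into a product of exponentials, which is valid precisely because of the commutativity established above and would fail for non-commuting summands; alongside this, the identity $\exp(X\otimes I)=\exp(X)\otimes I$ should be justified explicitly from the series using $(X\otimes I)^n=X^n\otimes I$ rather than merely asserted.
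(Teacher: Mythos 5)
Your proof is correct, and there is nothing to compare it against within the paper itself: this lemma is stated with a citation to Christandl et al.\ and no proof is given in the paper. Your argument --- decomposing $A_{G_1\square G_2}=A_1\otimes I_2+I_1\otimes A_2$, checking via the mixed-product property that the two summands commute, factoring the exponential accordingly, and evaluating each factor from the power series using $(X\otimes I)^n=X^n\otimes I$ --- is exactly the standard proof of this fact, and all the steps you flag as needing care (commutativity before splitting the exponential, and justifying $\exp(X\otimes I)=\exp(X)\otimes I$ term by term) are handled correctly.
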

Now we investigate PGST on Cartesian product of two graphs. 
\begin{theorem}\label{f}
Let $G_1$ and $G_2$ be two graphs so that $G_1$ is periodic at a vertex at $\tau$ and $G_2$ exhibits perfect state transfer at $\eta$. If $\tau$ and $\eta$ are independent over the rational numbers then $G_1\square G_2$ admits pretty good state transfer.
\end{theorem}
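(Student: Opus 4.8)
The plan is to imitate the proof of Theorem \ref{c2}, exploiting the tensor decomposition $H_{G_1\square G_2}(t)=H_{G_1}(t)\otimes H_{G_2}(t)$ together with Kronecker's approximation theorem. Write $a$ for the vertex at which $G_1$ is periodic, so that $H_{G_1}(\tau)e_a=\gamma e_a$ for some $\gamma$ with $|\gamma|=1$; iterating, $H_{G_1}(m\tau)e_a=\gamma^m e_a$ for every integer $m$. Let $u,v$ be the vertices between which $G_2$ has perfect state transfer at $\eta$, so $H_{G_2}(\eta)e_u=\mu e_v$ with $|\mu|=1$. I would show that $G_1\square G_2$ exhibits pretty good state transfer from $(a,u)$ to $(a,v)$. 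The key observation is that at the sampling times $t=m\tau$ the first tensor factor is exactly a unit scalar multiple of $e_a$, so only the second factor has to be driven close to perfect state transfer.

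To control the second factor, I would set $g(t)=|e_v^TH_{G_2}(t)e_u|$, a continuous function with $g(\eta)=1$ and $g(0)=0$. By Lemma \ref{p0}, perfect state transfer at $\eta$ forces $G_2$ to be periodic at $u$ with period $2\eta$, so $H_{G_2}(2\eta)e_u=\nu e_u$ for some unit $\nu$; consequently $g(t+2\eta)=g(t)$, making $g$ periodic with period $2\eta$ and hence uniformly continuous on $\Rl$. Thus for any $\epsilon>0$ there is $\delta>0$ with $|t-t'|<\delta$ implying $|g(t)-g(t')|<\epsilon$.

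The heart of the argument is to produce an integer $m$ with $m\tau$ close to an \emph{odd} multiple of $\eta$, since $g$ equals $1$ at odd multiples of $\eta$ but vanishes at even ones. Since $\tau$ and $\eta$ are rationally independent, $\theta:=\tau/(2\eta)$ is irrational; applying Theorem \ref{c1} with this $\theta$ and $\alpha=\tfrac12$ yields integers $p,q$ with $|p\theta-q-\tfrac12|<\delta/(2\eta)$, that is, $|p\tau-(2q+1)\eta|<\delta$. Because $g$ has period $2\eta$ and $(2q+1)\eta=\eta+q\,(2\eta)$, we get $g\bigl((2q+1)\eta\bigr)=g(\eta)=1$, and uniform continuity gives $|g(p\tau)-1|<\epsilon$. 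Evaluating the transition matrix at $t=p\tau$ and using $H_{G_1}(p\tau)e_a=\gamma^p e_a$, the $(a,v)$-entry has modulus
\[\left|(e_a\otimes e_v)^TH_{G_1\square G_2}(p\tau)(e_a\otimes e_u)\right|=|\gamma^p|\,\bigl|e_v^TH_{G_2}(p\tau)e_u\bigr|=g(p\tau),\]
which is within $\epsilon$ of $1$; letting $\epsilon\to 0$ establishes pretty good state transfer from $(a,u)$ to $(a,v)$.

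I expect the only delicate point to be the bookkeeping in the Kronecker step: choosing $\theta=\tau/(2\eta)$ and $\alpha=\tfrac12$ so that the approximant $p\tau$ lands near an odd multiple of $\eta$ (where $g=1$) rather than near an even one (where $g=0$), and converting the tolerance $\delta/(2\eta)$ back through the period correctly. Everything else is a direct transcription of the tensor-product identity and the uniform-continuity argument already carried out in Theorem \ref{c2}.
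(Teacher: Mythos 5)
Your proposal is correct and follows essentially the same route as the paper's own proof: sample the walk at times $m\tau$, use Kronecker's theorem (via the irrationality of $\tau/(2\eta)$) to land $m\tau$ near an odd multiple of $\eta$, exploit periodicity of $|e_v^T H_{G_2}(t)e_u|$ with period $2\eta$ for uniform continuity, and finish with the tensor identity $H_{G_1\square G_2}(t)=H_{G_1}(t)\otimes H_{G_2}(t)$. The only cosmetic difference is that the paper phrases the Kronecker step as density of $\{m\tau-2n\eta : m,n\in\Zl\}$ in $\Rl$, while you apply Theorem \ref{c1} directly with $\theta=\tau/(2\eta)$ and $\alpha=\tfrac12$; your justification of uniform continuity is in fact slightly more explicit than the paper's.
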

\begin{proof}
Since $\tau$ and $\eta$ are independent over $\Ql$, by Kronecker's approximation theorem the set $\left\lbrace m\tau-2n\eta\; : \;m,n\in\Zl \right\rbrace$ is dense in $\Rl$. Hence for $\delta>0$, there exists $m,n\in\Zl$ so that
\begin{center}
$|m\tau-(2n+1)\eta|<\delta$.
\end{center}
Suppose $G_1$ is periodic at the vertex $u$ at $\tau$, \emph{i.e,} $|e_{u}^T H_{G_1}(m\tau)e_{u}|=1$ for all integer $m$. Also assume that $G_2$ exhibits perfect state transfer at $\eta$ between the pair of vertices $v$ and $w$. Consider $f(t)=|e_{v}^TH_{G_2}(t)e_{w}|$. For each integer $n$ we have
\begin{eqnarray*}
f((2n+1)\eta)
&=& |e_{v}^TH_{G_2}\left((2n+1)\eta\right)e_{w}| \\
&=& |e_{v}^TH_{G_2}\left(2n\eta\right)H_{G_2}\left(\eta\right)e_{w}| \\
&=& |e_{v}^TH_{G_2}\left(2n\eta\right)\gamma e_{v}|,\;\text{as } H_{G_2}\left(\eta\right)e_{w}=\gamma e_{v},\text{ for some }\gamma\in\Cl \text{ with } |\gamma|=1\\
&=& |e_{v}^TH_{G_2}\left(2n\eta\right)e_{v}|.
\end{eqnarray*}
By Lemma \ref{p0}, the graph $G_2$ is periodic at the vertex $v$ at $2\eta$ and therefore \[1=|e_{v}^TH_{G_2}\left(2n\eta\right)e_{v}|=f((2n+1)\eta).\]
Now $f(t)$ is uniformly continuous and therefore for $\epsilon>0$ there is $m,n\in\Zl$ so that
\begin{center}
$|f\left(m\tau\right)-f\left((2n+1)\eta\right)|<\epsilon$, \emph{i.e}, $|f\left(m\tau\right)-1|<\epsilon.$
\end{center}
Since $H_{G_1\square G_2}(t)=H_{G_1}(t)\otimes H_{G_2}(t)$, using properties of tensor product, we obtain
\begin{eqnarray*}
|\left(e_{u}\otimes e_{v}\right)^T\left(H_{G_1\square G_2}(m\tau)\right)\left(e_{u}\otimes e_{w}\right)|
& = & |\left(e_{u}\otimes e_{v}\right)^T\left(H_{G_1}(m\tau)\otimes H_{G_2}(m\tau)\right)\left(e_{u}\otimes e_{w}\right)|\\
& = & |\left(e_{u}^T H_{G_1}(m\tau)e_{u}\right)\otimes\left(e_{v}^T H_{G_2}(m\tau)e_{w}\right)|\\
& = & f(m\tau).
\end{eqnarray*}
Hence $G_1\square G_2$ has PGST between the pair of vertices $\left(u,v\right)$ and $\left(u,w\right)$.
\end{proof}
It is interesting to see that a graph can have PGST from a vertex $u$ to two different vertices $v$ and $w$. However, it is well known that if $v\neq w$ then there cannot be perfect state transfer from $u$ to $v$ and also from $u$ to $w$. Consider the following example.
\begin{ex}
The path $P_2$ with vertices, say $u,\;v$, exhibits perfect state transfer between the pair of vertices $u$ and $v$ at $\frac{\pi}{2}$. It is also well known that $P_2$ is periodic at $\pi$. On the other hand the path $P_3$ with vertices, say $1,\;2$ and $3$, admits perfect state transfer between the pair of vertices $1$ and $3$ at $\frac{\pi}{\sqrt{2}}$. The path $P_3$ is also periodic at $\frac{2\pi}{\sqrt{2}}$. Hence by Theorem \ref{f}, the Cartesian product $P_2\square P_3$ allows PGST between the pair of vertices $(u,1)$ and $(u,3)$. Again we find that $P_3\square P_2$ admits PGST between the vertices $(1,u)$ and $(1,v)$. Since $P_3\square P_2\cong P_2\square P_3$, we see that $P_2\square P_3$ also exhibits PGST between the pair of vertices $(u,1)$ and $(v,1)$.
\end{ex}
Now using Theorem \ref{f}, we find that some of the NEPS with factor graphs $P_2$ and $P_3$ which can be realized as a Cartesian product of an NEPS of $P_2$ and an NEPS of $P_3$, exhibits PGST. 
  
\begin{cor}
Let $\Omega\subset\Zl_{2}^{m}\setminus\left\lbrace\o\right\rbrace$ and $\Omega'\subset\Zl_{2}^{n}\setminus\left\lbrace\o\right\rbrace$. Suppose the hamming weight of each tuple in $\Omega$ is odd, $k=\min\limits_{\beta\in\Omega} s(\beta)$ and $\Omega^{*}=\left\lbrace\beta\in\Omega : s(\beta)=k \right\rbrace $. If $\sum\limits_{\beta\in\Omega^*}\beta\neq \o$ or $\sum\limits_{\beta\in\Omega'}\beta\neq \o$ then the Cartesian product of $NEPS\left(P_3,\ldots,P_3;\Omega\right)$ and $NEPS\left(P_2,\ldots,P_2;\Omega'\right)$ exhibits PGST.
\end{cor}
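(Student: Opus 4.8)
The plan is to reduce the claim to Theorem \ref{f} by exhibiting, in each of the two cases of the disjunction, one factor as a periodic graph and the other as a graph with perfect state transfer, with mutually irrational periods. Throughout write $G=NEPS(P_3,\ldots,P_3;\Omega)$ and $G'=NEPS(P_2,\ldots,P_2;\Omega')$, and recall $k=\min_{\beta\in\Omega}s(\beta)$, which is odd since every tuple in $\Omega$ has odd hamming weight. Two periodicity facts come for free. First, since every eigenvalue of $P_2$ is $\pm1$, equation (\ref{q0}) shows every eigenvalue of $G'$ is an integer, so $H_{G'}(2\pi)=I$ and $G'$ is periodic at $2\pi$. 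Second, by Theorem \ref{a4} the graph $G$ is periodic at $\frac{2\pi}{(\sqrt2)^k}$. The only real work is to supply, in each case, the perfect state transfer factor and then check an irrationality condition.

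Case A: suppose $\sum_{\beta\in\Omega^*}\beta\neq\o$. Then Theorem \ref{a7} gives perfect state transfer in $G$ at $\eta=\frac{\pi}{(\sqrt2)^k}$; the rank hypothesis there only guarantees connectedness and is irrelevant to the transition-matrix entry. Taking the periodic graph to be $G'$ at $\tau=2\pi$, the ratio $\tau/\eta=2(\sqrt2)^k$ is irrational because $k$ is odd, so $\tau$ and $\eta$ are independent over $\Ql$. Theorem \ref{f} then yields pretty good state transfer in $G'\square G$, and hence in $G\square G'$ by commutativity of the Cartesian product.

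Case B: suppose $\sum_{\beta\in\Omega'}\beta\neq\o$. Here I must first establish perfect state transfer in $G'$ itself, the one ingredient not directly quoted above. Let $A$ be the adjacency matrix of $P_2$, so $A^2=I_2$, and for $\beta\in\Omega'$ write $A_\beta=A^{\beta_1}\otimes\cdots\otimes A^{\beta_n}$; then $A_\beta^2=I$, so $H_\beta(t)=\cos t\,I-i\sin t\,A_\beta$ and $H_\beta(\tfrac{\pi}{2})=-iA_\beta$. By Proposition \ref{a1}, $H_{G'}(\tfrac{\pi}{2})=\prod_{\beta\in\Omega'}(-iA_\beta)=(-i)^{|\Omega'|}\bigotimes_{i=1}^{n}A^{s_i}$, where $s=\sum_{\beta\in\Omega'}\beta\in\Zl_2^n$. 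Identifying the vertex set of $G'$ with $\{0,1\}^n$, the permutation $\bigotimes_i A^{s_i}$ sends each $e_x$ to $e_{x\oplus s}$; since $s\neq\o$ we have $x\oplus s\neq x$, so $G'$ has perfect state transfer at $\eta=\frac{\pi}{2}$. Now take the periodic graph to be $G$ at $\tau=\frac{2\pi}{(\sqrt2)^k}$; the ratio $\tau/\eta=\frac{4}{(\sqrt2)^k}$ is irrational since $k$ is odd, and Theorem \ref{f} again produces pretty good state transfer in $G\square G'$.

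The main obstacle is Case B's perfect state transfer computation for the $P_2$-NEPS, since the results recalled in the excerpt are stated only for $P_3$; the involution identity $A_\beta^2=I$ together with Proposition \ref{a1} dispatches it. A secondary but essential point is the independence of periods over $\Ql$: in both cases it reduces to the irrationality of $(\sqrt2)^k$, which holds precisely because the hypothesis forces $k$ to be odd. This is exactly where the odd-weight assumption on $\Omega$ is consumed.
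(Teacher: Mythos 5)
Your proof is correct, and its skeleton matches the paper's: split into the two cases of the disjunction, in each case cast one factor as the perfect-state-transfer graph and the other as the periodic graph, check that the two times are rationally independent (this is exactly where oddness of $k$ is consumed, as you say), and invoke Theorem \ref{f}. The genuine difference lies in how the two facts about $NEPS\left(P_2,\ldots,P_2;\Omega'\right)$ are obtained. The paper simply cites Lemma $7.2$ of \cite{god1}, which supplies both ingredients for cubelike graphs: periodicity at $\pi$, and perfect state transfer at $\frac{\pi}{2}$ from $x$ to $x\oplus s$ with $s=\sum_{\beta\in\Omega'}\beta$ whenever $s\neq\o$. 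You reprove both from scratch: periodicity (at $2\pi$ rather than $\pi$, an immaterial difference since only the ratio's irrationality matters) from integrality of the eigenvalues via equation (\ref{q0}), and perfect state transfer from the involution identity $A_\beta^2=I$ together with Proposition \ref{a1}, yielding $H_{\Omega'}\left(\tfrac{\pi}{2}\right)=(-i)^{|\Omega'|}\bigotimes_{i} A^{s_i}$. This makes your argument self-contained where the paper leans on an external reference, at the cost of reproducing the standard cubelike computation. Two further points in your favour: you explicitly justify discarding the rank hypothesis of Theorem \ref{a7} (the paper relies on the remark preceding that theorem, and likewise applies it to $\Omega$ without a rank assumption), and you correctly invoke Theorem \ref{f}, whereas the paper's own proof twice cites Theorem \ref{c2}, which is evidently a typo for Theorem \ref{f}.
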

\begin{proof}
Assume that $\sum\limits_{\beta\in\Omega^*}\beta\neq \o$ in $\Zl_{2}^{m}$. Then by Theorem \ref{a7}, $NEPS\left(P_3,\ldots,P_3;\Omega\right)$ admits perfect state transfer at $\frac{\pi}{\left(\sqrt{2}\right)^k}$. Further, Lemma $7.2$ in \cite{god1} implies that $NEPS\left(P_2,\ldots,P_2;\Omega'\right)$ is periodic at $\pi$. Since $k$ is odd, the numbers $\pi$ and $\frac{\pi}{\left(\sqrt{2}\right)^k}$ are independent over $\Ql$. Therefore, by applying Theorem \ref{c2}, we see that the Cartesian product of $NEPS\left(P_2,\ldots,P_2;\Omega'\right)$ and $NEPS\left(P_3,\ldots,P_3;\Omega\right)$ exhibits PGST. Hence  we have PGST in the Cartesian product of $NEPS\left(P_3,\ldots,P_3;\Omega\right)$ and $NEPS\left(P_2,\ldots,P_2;\Omega'\right)$.\par
Suppose $\sum\limits_{\beta\in\Omega'}\beta\neq \o$ in $\Zl_{2}^{n}$. Then by Lemma $7.2$ in \cite{god1}, the graph $NEPS\left(P_2,\ldots,P_2;\Omega'\right)$ admits perfect state transfer at $\frac{\pi}{2}$. Also, by Theorem \ref{a4}, we find that $NEPS\left(P_3,\ldots,P_3;\Omega\right)$ is periodic at $\frac{2\pi}{\left(\sqrt{2}\right)^k}$. Hence, by Theorem \ref{c2}, the Cartesian product of $NEPS\left(P_3,\ldots,P_3;\Omega\right)$ and $NEPS\left(P_2,\ldots,P_2;\Omega'\right)$ exhibits PGST.
\end{proof}
\section*{Conclusions}
In  \cite{pal}, we have seen that there are many NEPS of $P_3$ allowing perfect state transfer. Here we considered the case when the basis of an NEPS of $P_3$ contains tuples with hamming weights of both parities. We found that in such cases there is no perfect state transfer. Then we looked for PGST in that class of graphs. We found that many of them admit PGST.
\par The only class of NEPS of $P_3$ where we still do not know whether there is perfect state transfer are those NEPS in Theorem \ref{a7} where $\sum\limits_{\beta\in\Omega^{*}}\beta= \mathbf{0}$ in $\Zl_{2}^{n}$. By Theorem \ref{a4}, we find that these graphs are essentially periodic. Therefore, if a graph in that class has PGST then it must also exhibit perfect state transfer. 
\par Finally, in Theorem \ref{f}, we found a general result on PGST on Cartesian product of graphs. Using that we have seen that there are NEPS with factor graphs $P_2$ and $P_3$ exhibiting PGST. Following this we can construct many other type of graphs allowing PGST.   

\section*{Acknowledgment}
We sincerely thank Prof. Sukanta Pati and Prof. Anjan K. Chakrabarty of Indian Institute of Technology Guwahati for giving useful insights into the material presented.  

\end{document}